\newtheorem{theorem}{Theorem}[section]
\newtheorem{corollary}[theorem]{Corollary}
\newtheorem{lemma}[theorem]{Lemma}
\newtheorem{proposition}[theorem]{Proposition}
\theoremstyle{definition}
\newtheorem{definition}[theorem]{Definition}
\theoremstyle{remark}
\newtheorem{remark}[theorem]{Remark}
 \numberwithin{equation}{section}
\newcommand{\D}{\mathbb{D}}
\newcommand{\A}{\mathcal{A}}
\newcommand{\C}{\mathbb{C}}
\newcommand{\B}{\mathcal{B}}
\newcommand{\R}{\mathbb{R}}
\newcommand{\N}{\mathbb{N}}
\newcommand{\dpi}{\frac{1}{2\pi}\int_{0}^{2\pi}}
\newcommand{\cH}{\mathcal{H}}
\renewcommand{\Re}{\operatorname{Re}}
\renewcommand{\int}{\intop}
\renewcommand{\d}{\mathrm{d}}
\newcommand{\dtr}{\mathfrak {D}_{\mathcal R}}
\begin{document}

%
%
%
%
%
%
%
%
%

\title[Dirichlet-to-Robin Operators]{Dirichlet-to-Robin Operators via Composition Semigroups}

\author[L. Perlich]{Lars Perlich}

\address{%
Institute of Analysis\\
Technische Universit\"{a}t Dresden\\
Germany}

\email{lars.perlich@mailbox.tu-dresden.de}

\thanks{The author is supported by S\"achsisches Landesstipendium.}

\subjclass{47B38, 47B33, 47D06}

\keywords{Composition operators, Spaces of holomorphic functions, Dirichlet-to-Neumann,
Dirichlet-to-Robin.}



\begin{abstract}
We show well-posedness for an evolution problem associated with the Dirichlet-to-Robin operator for certain Robin boundary data. Moreover, it turns out that the semigroup generated
by the Dirichlet-to-Robin operator is closely related to a weighted
semigroup of composition operators on an appropriate Banach space of
analytic functions.
\end{abstract}

\maketitle
\section{Introduction}

In recent years, the Dirichlet-to-Neumann operator has been studied
intensively. In the beginning of the 20th century, these operators
were dealt with theoretically, while in the 1980s and 1990s they were
used to analyze inverse problems to determine coefficients of a differential
operator. These problems apply, e.g., to image techniques in medicine
and also to find defects in materials. \\
According to Arendt and ter Elst, the Dirichlet-to-Neumann operator
can be obtained as an example of an operator associated with $m$-sectorial
forms, see \cite{ArEl12}. Using methods from function theory, our
purpose is to give an alternative approach to Poincar\'{e}-Steklov operators
and the related semigroups on boundary spaces of Banach spaces of
analytic functions. It turns out, as pointed out by Lax \cite{Lax02},
that there is a surprising connection between semigroups of composition
operators on spaces of harmonic functions on the unit disk referring
to a specific semiflow and the Dirichlet-to-Neumann operator. In fact,
we can extend this observation to the Laplace equation with Robin
boundary conditions on Jordan domains in $\C$.
More precisely, we study the evolution problem

\begin{equation}
\begin{cases}
\partial_{t}u-g\cdot u-G\cdot\partial_{z}u=0 &\textrm{on } (0,\infty)\times\partial\Omega,\\
-\Delta u=0 & \textrm{on } (0,\infty)\times\partial\Omega,\\
u(0,\cdot)=u_{0} & \textrm{on }\partial\Omega,
\end{cases}\label{eq:DTR}
\end{equation}
where $\Omega\subsetneq\C$ is a Jordan domain and $G$ and $g$ are boundary values of appropriate holomorphic functions on $\Omega$. We prove well-posedness of (\ref{eq:DTR}) in various spaces of distributions on $\partial\Omega$ including the scale of $L^p$-spaces.
As mentioned above, our approach does not use form methods but the theory of (weighted) composition operators on spaces of holomorphic and harmonic functions (for the moment only) on planar domains. Our method  appears to be restricted to problems involving the Laplace operator, while the variational approach  to Dirichlet-to-Neumann and Dirichlet-to-Robin operators using the theory of forms is quite flexible with respect the choice of elliptic operators in the domain $\Omega$. However, there it seems difficult to handle coefficients in front of the associated Neumann derivative (at least, we do not see how to handle them). Here, we can allow a large class of coefficient functions $G$ and $g$. In particular, it may happen that $G$ degenerates at one point on the boundary. Moreover, using our method, we  can define Dirichlet-to-Neumann and Dirichlet-to-Robin operators on several spaces of distributions.

This article is organized as follows. In Section 2 we introduce the notion
of admissible spaces  which is eventually our tool to solve the above posed evolution problem. We discuss some examples of admissible spaces, and we investigate
corresponding boundary spaces. Then, in Section 3, we examine
the connection between certain Poincar\'{e}-Stecklov operators, namely
Dirichlet-to-Neumann and Dirichlet-to-Robin operators, and weighted
semigroups of composition operators, and prove our main theorem. 

\section{Admissible spaces}

Initiated by the famous paper by Berkson and Porta \cite{BP78}, semigroups
of composition operators were studied intensively by many authors
on various spaces of holomorphic functions defined on the unit disk,
see, for example, \cite{AMW17,Bl13,Ko90,Sis86,Sis98}. In our approach,
we consider (weighted) semigroups of composition operators on spaces
of harmonic and holomorphic functions which are defined on a simply
connected domain $\Omega\subsetneq\C$ bounded by a Jordan curve.
To give the definition of such a semigroup, we need the notion of
a semiflow of holomorphic functions. 

\subsection*{Semiflows of holomorphic functions}
\begin{definition}
\label{def:flow}Let $\Omega\subsetneq\C$ be simply connected. Let
$\varphi:\Omega\rightarrow\Omega$ be holomorphic (we write $\varphi\in\cH(\Omega)$)
such that for every $t>0$ the fractional iterates $\varphi_{t}$
are holomorphic selfmaps in $\Omega$. A family $(\varphi_{t})_{t}$
is called a semiflow of holomorphic functions if it satisfies the
following properties: 
\begin{enumerate}
\item $\varphi_{0}(z)=z$ for all $z\in\Omega$, 
\item $\varphi_{s+t}(z)=\varphi_{s}(\varphi_{t}(z))$ for all $s,t>0$ and
$z\in\Omega$, 
\item $\varphi_{t}(z)\rightarrow z$ as $t\rightarrow0^{+}$ for all $z\in\Omega$. 
\end{enumerate}
\end{definition}
Given a semiflow $(\varphi_{t})_{t}$ we define its generator by 
\[
G(z)\colon=\lim_{t\to0^{+}}\frac{\varphi_{t}(z)-z}{t}
\]
 for every $z\in\Omega$. 

Since $\Omega$ is simply connected, by the Riemann mapping theorem
there exists a conformal map $k:\Omega\to\D$, and thus every semiflow
on $\Omega$ can be written in terms of a semiflow on the unit disk.
Let $(\varphi_{t})_{t}$ be a semiflow on $\D$. As a consequence
of the chain rule, the generator of $(\psi_{t})_{t}:=(k^{-1}\circ\varphi_{t}\circ k)_{t}$
can be written in terms of the generator of $(\varphi_{t})_{t}$.
For all holomorphic selfmaps $\varphi$ in the unit disk which are
not automorphisms, the embeddability into a semiflow can be characterized
in terms of the Denjoy-Wolff point of $\varphi$, see for instance
\cite{El02}. The Denjoy-Wolff point is defined as the unique fixed
point of a holomorphic selfmap in the unit disk which is not an automorphism
in the unit disk. Such a point can be found in the interior of the
unit disk as well as on the boundary. Thus we can use appropriate
M\"obius transforms to shift an interior Denjoy-Wolff point to zero
and a Denjoy-Wolff point on the boundary to 1. In our case, the representation
of $(\psi_{t})_{t}$ on $\Omega$ in terms of a semiflow on the unit
disk gives also the unique fixed point of every $\psi_{t}$ as
$k^{-1}(b)$ where $b$ is the Denjoy-Wolff point of $(\varphi_{t})_{t}$.
From the theory of differential equations, we obtain that $\varphi_{t}$
is univalent for every $t>0$, hence the same is true for $\psi_{t}$. \\
Let $b\in\bar{\D}$ be the Denjoy-Wolff point of a semiflow $(\varphi_{t})_{t}$
in $\mathcal{H}(\D)$. Then, by \cite{BP78}, the generator of $(\varphi_{t})_{t}$
is given by the Berkson and Porta formula 
\begin{equation}
G(z)=F(z)(\bar{b}z-1)(z-b),\label{eq:BP}
\end{equation}
where $F\colon\mathbb{D}\rightarrow\mathbb{C}$ is holomorphic and
$\Re(F(z))\geq0\,(z\in\D)$. It is also well known that $G$ is holomorphic
in $\D$ and that $\frac{d}{dt}\varphi_{t}=G(\varphi_{t})$.
In fact, if a holomorphic function $G\colon\D\rightarrow\C$ extends
continuously to $\bar{\D}$ and $\Re(G(z)\bar{z})\leq0$ for every
$z\in\D$, then $G$ is the generator of a semiflow in $\D$, see
\cite[Thm 1]{ARS96}. Conversely, a generator of a semiflow need not
extend continuously to the closure of $\D.$ On the other hand, note
that, by Fatou's theorem, a generator $G$ has radial limits almost
everywhere since the function $F$ is the composition of a bounded
holomorphic function and a M\"obius transform. The angle condition at
the boundary still holds.
\begin{lemma}
Let $(\varphi_{t})_{t}$ be a semiflow in the unit disk and $G$ its
generator. Then $\Re(G(z)\bar{z})\le0,\,\text{for a.e. }z\in\partial\D$.
\end{lemma}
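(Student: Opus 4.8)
The plan is to reduce the statement to the Berkson--Porta representation (\ref{eq:BP}) together with the a.e.\ existence of radial boundary values. Write $G(z) = F(z)(\bar b z - 1)(z-b)$ with $F$ holomorphic on $\D$ and $\Re F \ge 0$ there, where $b \in \bar\D$ is the Denjoy--Wolff point of $(\varphi_t)_t$.

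\emph{Step 1 (an algebraic identity on the circle).} For $|z| = 1$ one has $(z-b)\bar z = z\bar z - b\bar z = 1 - b\bar z$ and $\bar b z - 1 = -(1 - \bar b z) = -\overline{1 - b\bar z}$, hence
\[
(\bar b z - 1)(z - b)\,\bar z \;=\; -\,\overline{(1-b\bar z)}\,(1 - b\bar z) \;=\; -\,|1 - b\bar z|^2 \;\le\; 0 .
\]
Thus, at least formally, $G(z)\bar z = -\,|1-b\bar z|^2\,F(z)$ on $\partial\D$, so the claimed inequality is equivalent to $|1-b\bar z|^2\,\Re F(z) \ge 0$, which holds as soon as the boundary value of $\Re F$ is nonnegative; note that $1 - b\bar z \neq 0$ for every $z \in \partial\D$ when $|b| < 1$, and for all but one point when $|b| = 1$.

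\emph{Step 2 (passage to the boundary).} Since $\Re F$ is a nonnegative harmonic function on $\D$, by the Herglotz representation it is the Poisson integral of a positive finite Borel measure on $\partial\D$; consequently $\Re F$ has radial limits a.e., equal a.e.\ to the density of the absolutely continuous part of that measure, and in particular these limits are $\ge 0$ a.e. The harmonic conjugate of the Poisson integral of a finite measure also admits radial limits a.e., so $F$ itself has nontangential limits a.e.\ on $\partial\D$; since the polynomial factor $(\bar b z - 1)(z - b)$ is continuous on $\bar\D$, the product $G = F\cdot(\bar b z - 1)(z - b)$ inherits radial limits a.e., and the boundary value of $G(z)\bar z$ is the product of the respective boundary values. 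Combining this with Step 1 gives $\Re\big(G(z)\bar z\big) = -\,|1 - b\bar z|^2\,\Re F(z) \le 0$ for a.e.\ $z \in \partial\D$.

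The only delicate point is the measure-theoretic part of Step 2: one has to be sure that the a.e.-defined radial limit of the positive harmonic function $\Re F$ is genuinely $\ge 0$ a.e.\ and that it coincides with the real part of the a.e.-defined radial limit of the holomorphic function $F$. This is exactly the content of the Herglotz and F.\ and M.\ Riesz circle of ideas (equivalently, one may argue via the bounded holomorphic function $h = (1-F)/(1+F)$, which satisfies $|h| \le 1$ on $\D$ and hence has radial limits a.e.\ by Fatou's theorem, from which one recovers $F$ and the sign of $\Re F$ on the boundary). Everything else is the elementary computation of Step 1.
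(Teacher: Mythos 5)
Your proof is correct and follows essentially the same route as the paper: the Berkson--Porta formula plus the algebraic identity $(\bar b z-1)(z-b)\bar z=-|z-b|^2$ on the circle, with the sign of $\Re F$ carried to the boundary via a.e.\ radial limits (the paper justifies these by noting, as you do at the end, that $F$ is a M\"obius transform of a bounded holomorphic function). Your Step 2 merely makes explicit the measure-theoretic details the paper leaves to a preceding remark.
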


\begin{proof}
Let $b\in\bar{\D}$ be the Denjoy-Wolff point of $(\varphi_{t})_{t}$.
Then, by \cite{BP78}, the generator is given by (\ref{eq:BP}) and
radial limits exist almost everywhere. For $z\in\partial\D$ we have

\begin{align*}
\Re(F(z)(\bar{b}z-1)(z-b)\bar{z})  &=  \Re(F(z)(\bar{b}-\bar{z})(z-b))\\
  &=  \Re(-F(z)|z-b|^{2})\\
  &\leq  0.
\end{align*}
\end{proof}
The same result holds true for generators of semiflows
on Jordan domains. 
\begin{lemma}
\label{lem:AngGenO}Let $\Omega\subsetneq\C$ be a Jordan domain.
Let $(\varphi_{t})_{t}$ be a semiflow in $\Omega$ and $G$ its generator.
Then $\Re(G(x)\overline{\nu(x)})\le0,\text{ for a.e.}\,x\in\partial\Omega$,
where $\nu(x)$ is the normal vector at $x.$
\end{lemma}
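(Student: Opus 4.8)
The plan is to reduce the claim to the disk case established in the preceding lemma by transporting the semiflow through a Riemann map. Since $\Omega$ is a Jordan domain, there is a conformal map $k\colon\Omega\to\D$ which, by Carath\'{e}odory's theorem, extends to a homeomorphism $\bar\Omega\to\bar\D$. Put $\tilde\varphi_t:=k\circ\varphi_t\circ k^{-1}$ for $t>0$. Each $\tilde\varphi_t$ is a holomorphic selfmap of $\D$, and $(\tilde\varphi_t)_t$ inherits properties (1)--(3) of Definition \ref{def:flow} from $(\varphi_t)_t$ by continuity of $k$ and $k^{-1}$, so it is a semiflow in $\D$. Differentiating $\tilde\varphi_t(w)=k(\varphi_t(k^{-1}(w)))$ at $t=0^+$ and using $\frac{\d}{\d t}\varphi_t=G\circ\varphi_t$ together with the chain rule, its generator $\tilde G$ satisfies $\tilde G(w)=k'(k^{-1}(w))\,G(k^{-1}(w))$; equivalently, $G(z)=\tilde G(k(z))/k'(z)$ on $\Omega$.

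Next I would compare the outward normals on the two boundaries. A conformal map preserves angles, so at a boundary point $x\in\partial\Omega$ at which $k$ possesses a nonzero finite angular derivative it carries the unit outward normal $\nu(x)$ to the unit outward normal of $\D$ at $k(x)$, which is $k(x)$ itself; in terms of the derivative this means $k'(x)\nu(x)=|k'(x)|\,k(x)$, so that $\overline{\nu(x)}=|k'(x)|\,\overline{k(x)}/\overline{k'(x)}$. Substituting the formula for $G$ gives, for such $x$,
\[
\Re\big(G(x)\,\overline{\nu(x)}\big)=\Re\!\left(\frac{\tilde G(k(x))}{k'(x)}\cdot\frac{|k'(x)|\,\overline{k(x)}}{\overline{k'(x)}}\right)=\frac{1}{|k'(x)|}\,\Re\big(\tilde G(k(x))\,\overline{k(x)}\big).
\]
By the preceding lemma $\Re(\tilde G(\zeta)\bar\zeta)\le0$ for a.e. $\zeta\in\partial\D$; taking $\zeta=k(x)$ and using $|k'(x)|^{-1}>0$ yields $\Re(G(x)\overline{\nu(x)})\le0$ for a.e. $x\in\partial\Omega$.

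The real work is to ensure that all of the above holds for \emph{almost every} $x\in\partial\Omega$, and this is where the regularity of the Jordan curve is used. One needs $\partial\Omega$ rectifiable so that $\nu(x)$ is defined a.e.\ with respect to arc length; then the theorem of F.\ and M.\ Riesz guarantees that $k'$ lies in the Hardy class, has finite nonzero nontangential limits a.e.\ on $\partial\Omega$, and that the boundary correspondence induced by $k$ maps null sets to null sets in both directions. Absolute continuity of the boundary map is precisely what lets the exceptional set from the disk lemma pull back to a null set, while the a.e.\ existence of the angular derivative together with conformality justifies the identity $k'(x)\nu(x)=|k'(x)|\,k(x)$. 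All of these facts are classical; see Pommerenke, \emph{Boundary Behaviour of Conformal Maps}. I expect this measure-theoretic bookkeeping, not the algebra, to be the only genuine obstacle; if the paper in fact works with a smoother Jordan curve (e.g.\ of class $C^1$ or Dini-smooth), then $k$ extends conformally across $\partial\Omega$ and the argument becomes routine.
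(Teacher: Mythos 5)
Your proof is correct and follows essentially the same route as the paper: conjugate the semiflow to the disk by a Riemann map, use the generator identity $\tilde G\circ k = k'\cdot G$ and the boundary relation between $\nu(x)$ and $k(x)/k'(x)$, and reduce to the disk lemma. The only differences are cosmetic (you use the unit normal, producing a factor $|k'(x)|^{-1}$ where the paper gets $|k'(x)|^{-2}$) plus your more explicit discussion of the a.e.\ bookkeeping, which the paper handles by citing Pommerenke for the continuous extension and the a.e.\ non-vanishing angular derivative.
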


\begin{proof}
Let $k\colon\Omega\rightarrow\D$ be conformal. Therefore $(\psi_{t})_{t}=(k\circ\varphi_{t}\circ k^{-1})_{t}$
is a semiflow in the unit disk. Let $\tilde{G}$ be the generator
of $(\psi_{t})_{t}$. Then $\Re(\tilde{G}(z)\bar{z})\le0,\,\text{for a.e. }z\in\partial\D$.
\\
 For $z=k(x)\in\D$, we have 
\begin{align}
\tilde{G}(k(x))=\lim_{t\rightarrow0}\frac{d}{dt}\psi_{t}(k(x)) &=  \lim_{t\rightarrow0}\frac{d}{dt}k\circ\varphi_{t}(x)\nonumber \\
  &=  \lim_{t\rightarrow0}k'(\varphi_{t}(x))\frac{d}{dt}\varphi_{t}(x)\nonumber \\
  &=  k'(x)G(x).\label{eq:konfGen}
\end{align}
The function $k$ extends continuously to $\bar{\D}$ (see \cite[Thm. 2.6]{Po92})
and has non-vanishing angular derivative a.e. (see \cite[Thm. 6.8]{Po92}). Furthermore, for $x\in\partial\Omega$, we have $\nu(x)=\frac{k(x)}{k'(x)}$. For every $x\in\partial\Omega$ there exists a unique $z\in\partial\D$
such that $k(x)=z$, so 

\begin{align*}
\Re(G(x)\overline{\nu(x)})  &=  \Re\left(\frac{\tilde{G}(z)}{k'(x)}\overline{\left(\frac{z}{k'(x)}\right)}\right)\\
  &=  \frac{1}{|k'(x)|^{2}}\Re(\tilde{G}(z)\overline{z})\\
  &\leq  0.
\end{align*}
 
\end{proof}
Next, we transfer the characterization of generators of semiflows
in the unit disk given above to Jordan domains.

\begin{proposition}
\label{prop:CharGen}Let $G:\Omega\to\C$ be holomorphic, where $\Omega\subsetneq\C$
is simply connected.\\
(I) If $\partial\Omega$ is Dini-smooth and $G$ extends continuously
to $\bar{\Omega}$ and $\Re(G(x)\overline{\nu(x)}\le0$ for a.e. $x\in\partial\Omega$,
then $G$ is the generator of a semiflow in $\Omega$.\\
(II) If for every conformal map $k:\Omega\to\D$ there exists $\tau\in\bar{\Omega}$
and a holomorphic function $F:\D\to\C$ with positive real part such
that 
\begin{equation}
G(x)=\frac{F\circ k(x)(\overline{k(\tau)}k(x)-1)(k(x)-k(\tau))}{k'(x)}\quad(x\in\Omega),\label{eq:konfBP}
\end{equation}
 $G$ is the generator of a semiflow in $\Omega$. In this case, we
say that $G$ admits a conformal Berkson and Porta representation.
\end{proposition}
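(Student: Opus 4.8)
The plan is to reduce both statements to the known characterization of generators of semiflows in the unit disk, transporting everything through a conformal map $k\colon\Omega\to\D$ via the conjugation relation $(\psi_t)_t = (k\circ\varphi_t\circ k^{-1})_t$ and the generator transformation formula \eqref{eq:konfGen}, which gives $\tilde G(k(x)) = k'(x)\,G(x)$ for the generator $\tilde G$ of $(\psi_t)_t$. The point is that semiflows on $\Omega$ and on $\D$ correspond bijectively under this conjugation, so $G$ generates a semiflow in $\Omega$ if and only if $\tilde G := (G/(k')^{-1})\circ k^{-1}$, i.e. the function $z\mapsto k'(k^{-1}(z))\,G(k^{-1}(z))$, generates a semiflow in $\D$.

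For part (I), I would start from the hypothesis that $\partial\Omega$ is Dini-smooth. By the theory of conformal maps onto Dini-smooth Jordan domains (Pommerenke, \cite[Thm.~3.5]{Po92}), the conformal map $k\colon\Omega\to\D$ and its derivative $k'$ extend continuously to $\bar\Omega$, with $k'$ non-vanishing on $\bar\Omega$; the same holds for $k^{-1}$ and $(k^{-1})'$ on $\bar\D$. Consequently $\tilde G(z) = k'(k^{-1}(z))\,G(k^{-1}(z))$ extends continuously to $\bar\D$, being a product and composition of functions continuous up to the boundary. It then remains to check the angle condition $\Re(\tilde G(z)\bar z)\le 0$ for a.e.\ $z\in\partial\D$ — in fact everywhere, by continuity, once it holds a.e. Using $z = k(x)$ and the identity $\nu(x) = k(x)/k'(x)$ valid on $\partial\Omega$ (this requires the Dini-smoothness so that the boundary has a well-defined continuously varying normal and $k'$ is genuinely the boundary derivative), one computes
\[
\Re(\tilde G(z)\bar z) = \Re\bigl(k'(x)G(x)\,\overline{k(x)}\bigr) = |k'(x)|^{2}\,\Re\bigl(G(x)\overline{\nu(x)}\bigr)\le 0,
\]
where the last inequality is the hypothesis. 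Then \cite[Thm.~1]{ARS96} applies to $\tilde G$ on $\D$, yielding a semiflow $(\psi_t)_t$ with generator $\tilde G$, and pulling back by $k^{-1}$ produces the semiflow $(\varphi_t)_t := (k^{-1}\circ\psi_t\circ k)_t$ in $\Omega$ whose generator is $G$ (one verifies this last claim by running \eqref{eq:konfGen} in the reverse direction, which is a routine chain-rule computation).

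For part (II), the hypothesis \eqref{eq:konfBP} says precisely that $k'(x)\,G(x) = F(k(x))\,(\overline{k(\tau)}k(x) - 1)(k(x) - k(\tau))$, so that, writing $w = k(x)$ and $b := k(\tau)\in\bar\D$, the conjugated candidate generator is $\tilde G(w) = F(w)(\bar b w - 1)(w - b)$ with $F$ holomorphic on $\D$ and $\Re F \ge 0$. This is exactly the Berkson--Porta form \eqref{eq:BP}, so by \cite{BP78} $\tilde G$ is the generator of a semiflow $(\psi_t)_t$ in $\D$ with Denjoy--Wolff point $b$; conjugating back by $k^{-1}$ gives the desired semiflow in $\Omega$ with generator $G$ and with $k^{-1}(b) = \tau$ as the common fixed point, as already noted in the discussion preceding the proposition.

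I expect the main obstacle to be the boundary-regularity bookkeeping in part (I): one must be careful that Dini-smoothness of $\partial\Omega$ is exactly the hypothesis that makes $k'$ extend continuously and without zeros to $\bar\Omega$ (mere continuity of $k$, available for any Jordan domain, is not enough — the angular-derivative issues seen in Lemma~\ref{lem:AngGenO} would reappear), and that the formula $\nu(x) = k(x)/k'(x)$ for the outward normal is legitimate in this setting. Once the regularity is pinned down, the rest is the conjugation identity \eqref{eq:konfGen} together with the two cited black-box results (\cite[Thm.~1]{ARS96} for (I) and \cite{BP78} for (II)), plus the elementary verification that conjugating a semiflow by a conformal map again yields a semiflow whose generator transforms as stated — which one checks directly against the three defining properties in Definition~\ref{def:flow}.
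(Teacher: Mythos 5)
Your proposal is correct and follows essentially the same route as the paper: in both parts you conjugate by a conformal map $k$, pass to $\tilde G(z)=k'(k^{-1}(z))G(k^{-1}(z))$ via \eqref{eq:konfGen}, and invoke \cite[Thm.~1]{ARS96} for (I) (using Dini-smoothness for the continuous, non-vanishing extension of $k'$ and the identity $\nu = k/k'$ for the angle condition) and the Berkson--Porta formula \eqref{eq:BP} from \cite{BP78} for (II). The only cosmetic difference is the positive normalizing factor in the boundary computation ($|k'|^2$ in yours versus $1/|k'|$ in the paper's, reflecting a normalization of $\nu$), which is immaterial to the sign conclusion.
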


\begin{proof}
(I) Let $k:\Omega\to\D$ conformal. Define $\tilde{G}(z)=k'(k^{-1}(z))G(k^{-1}(z))$
for $z\in\D$. Then $\tilde{G}$ is a holomorphic function which admits
a uniformly continuous extension to $\bar{\D}$, by \cite[Thm 3.5]{Po92}.
Moreover, for $z\in\partial\D$,
\begin{align*}
\Re(\tilde{G}(z)\bar{z}) &= \Re\left(G(k^{-1}(z))\frac{k'(k^{-1}(z))}{k'(k^{-1}(z))}\overline{\left(\frac{k(k^{-1}(z))}{k'(k^{-1}(z))}\right)}\right)\\
 &= \Re(G(k^{-1}(z))\overline{\nu(k^{-1}(x)})\frac{1}{\left|k'(k^{-1}(z))\right|}\\
 &\le 0.
\end{align*}
So we can apply \cite[Thm. 1]{ARS96} which shows that $\tilde{G}$
is the generator of a semiflow $\psi_{t}$ in $\D$, and by (\ref{eq:konfGen})
$G$ is the generator of the semiflow $\left(\varphi_{t}\right)_{t}=\left(k^{-1}\circ\psi_{t}\circ k\right)_{t}$
.\\
(II) The function $(k'\cdot G)\circ k^{-1}:\D\to\C$ is given by the
Berkson and Porta formula, hence it is the generator of a semiflow
in $\D$ with Denjoy-Wolff point $b=k(\tau)$. The assertion follows
again by (\ref{eq:konfGen}).
\end{proof}

\subsection*{Weighted semigroups of composition operators}

Semiflows of holomorphic mappings lead to semigroups of composition
operators on spaces of holomorphic functions. Let
$\Omega\subset\C$ be simply connected, and consider the Frech\'{e}t space
$\cH(\Omega,\C)$ equipped with the topology of uniform convergence
on compact subset of $\Omega$. Let $(K_{n})_{n}$ be an increasing
sequence of compact subsets of $\Omega$ such that $\bigcup_{n}K_{n}=\Omega$.
We define a sequence of seminorms on $\cH(\Omega,\C)$ as follows
\[
p_{n}(f)\colon=\sup_{z\in K_{n}}|f(z)|\quad(f\in\cH(\Omega,\C)),
\]
and a metric induced by these seminorms by 
\[
d(f,g)\colon=\sum_{n=1}^{\infty}2^{-n}\frac{p_{n}(f-g)}{p_{n}(f-g)+1}\quad(f,g\in\cH(\Omega,\C)).
\]
For a given semiflow $(\varphi_{t})_{t}$, we define a family of composition
operators $(T_{t})_{t\ge0}$ acting on $\cH(\Omega,\C)$
as follows
\begin{align}
\label{eq:CO} T_{t}\colon\cH(\Omega,\C) &\to \cH(\Omega,\C)\\
f &\mapsto f\circ\varphi_{t}.\nonumber 
\end{align}
By the definiton of semiflows, this family is an operator semigroup
which is, in particular, strongly continuous since for all $n\in\N$,
we have
\[
\sup_{z\in K_{n}}|f(\varphi_{t}(z))-f(z)|\overset{t\to0^{+}}{\to}0.
\]
This defintion makes also sense when the space $h(\Omega,\C)$ of
harmonic functions is under consideration. Since, by the Cauchy-Riemann
equations, for every function $u\in h(\Omega,\C)$, we have $u\circ\varphi_{t}\in h(\Omega,\C)$.
\begin{definition}
\label{def:SGCO}Let $X\subset\mathbb{H}(\Omega,\C)$
be a Banach space and $(\varphi_{t})_{t}$ a semiflow of holomorphic
functions in $\cH(\Omega)$ generated by $G$. The space $X$ is called
$(G)$-admissible if the family of operators $(T_{t})_{t\ge0}$ defined
by (\ref{eq:CO}) satisfies the following two conditions:
\begin{itemize}
\item [(i)] $X$ is invariant under
$T_{t},$ i.e., $T_{t}X\subset X$ for all $t\ge0$.
\item [(ii)] $(T_{t})_{t\ge0}$
is strongly continuous on $X$.
\end{itemize}
\end{definition}

Given a semigroup of composition operators $(T_{t})_{t\ge0}$ on a
$(G)$-admissible Banach space $X$, the generator $\Gamma$ admits
a special form:
\[
\Gamma f=\lim_{t\to0^{+}}\frac{T_{t}f-f}{t}=G\cdot f'\quad(f\in\text{dom}\Gamma).
\]
Note that $G\cdot f'$ is a directional derivative. This is true for holomorphic functions and harmonic functions as well, but for convenience we write $\nabla f$ instead of $f'$ for harmonic functions to distinguish products of complex numbers from inner products. 
\subsubsection*{Examples}

Typical choices for the space $X$ are the Bergman spaces
\[
\mathcal{A}^{p}(\D)\colon=\mathcal{H}(\D,\C)\cap L^{p}(\D,dA)\,(p\geq1),
\]
where $dA$ denotes the normalized Lebesgue measure on $\D$, and
the Hardy spaces
\[
\mathcal{H}^{p}(\D)\colon=\left\{ f\colon\D\rightarrow\C\vert f\in\mathcal{H}(\D,\C);\,\sup_{0<r<1}\left(\dpi|f(re^{it})|^{p}\d t\right)^{\frac{1}{p}}<\infty\right\} \,(p\geq1).
\]
 The invariance is a consequence of Littlewood's subordination principle,
and the strong continuity follows from the density of the polynomials
and the dominated convergence theorem, see \cite{Sis98}, which is
also a comprehensive survey on semigroups of composition operators.

Indeed, this result carries over to Bergman and Hardy spaces on simply
connected domains. The Bergman spaces can be defined analogously to
the Bergman spaces for functions in the unit disk. For the Hardy space,
we can give at least two definitions for simply connected domains,
see \cite{Dur70}, either using harmonic majorants or via approximating
the boundary of $\Omega$ by rectifiable curves. Both definitions
are equivalent when analytic Jordan domains are considered. We use
the definition in terms of harmonic majorants.
\begin{definition}
Let $\Omega\subsetneq\C$ be simply connected. For $p\in[1,\infty)$,
the Hardy space $\cH^{p}(\Omega)$ consists of those functions $f\in\cH(\Omega,\C)$
such that the subharmonic functions $|f|^{p}$ is dominated by a harmonic
function $u:\Omega\to\R$.
\end{definition}

Equipped with the norm $\left\Vert f\right\Vert _{\cH^{p}(\Omega)}:=(u_{0}(z_{0}))^{\frac{1}{p}}\,(f\in\cH(\Omega))$
where $z_{0}\in\Omega$ is some fixed point and $u_{0}$ is the least
harmonic majorant for $f$, the Hardy space over $\Omega$ is a Banach
space. As in the unit disk, functions in $\cH^{p}(\Omega)$ admit
non-tangential limits a.e. on $\partial\Omega$ and the boundary function
is in $L^{p}(\partial\Omega)$. For more details about Hardy spaces
over general domains, we refer to \cite[Ch. 10]{Dur70}.

\begin{proposition}
\label{prop:SGHardy}Let $\Omega\subsetneq\C$ be simply connected.
Let $(\varphi_{t})_{t}$ be a semiflow of holomorphic functions in
$\cH(\Omega)$ generated by $G$. The Hardy space $\cH^{p}(\Omega)$
($p\in[1,\infty)$) is $(G)$-admissible.
\end{proposition}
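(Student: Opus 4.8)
The plan is to reduce everything to the unit disk via the Riemann map and then invoke the known result that $\cH^p(\D)$ is $(G)$-admissible for any semiflow generator $G$, which follows from Littlewood's subordination principle together with density of polynomials and dominated convergence (as cited via \cite{Sis98}). The conformal invariance of Hardy spaces over simply connected domains is the essential bridge: if $k\colon\Omega\to\D$ is conformal, then $f\mapsto (k^{-1})'\,^{1/p}\cdot (f\circ k^{-1})$ is an isometric isomorphism between $\cH^p(\Omega)$ and $\cH^p(\D)$, since a harmonic majorant for $|f|^p$ on $\Omega$ pulls back to a harmonic majorant for the corresponding function on $\D$, and conversely. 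Strictly speaking one must be a little careful because this isometry involves the factor $(k^{-1})'^{1/p}$, so composition operators on $\Omega$ do not correspond exactly to composition operators on $\D$ but rather to \emph{weighted} composition operators; however, for the purpose of invariance and strong continuity this weight is harmless.

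The key steps, in order, would be: first, fix a conformal map $k\colon\Omega\to\D$ and let $(\psi_t)_t=(k\circ\varphi_t\circ k^{-1})_t$ be the corresponding semiflow in $\D$, with generator $\tilde G = (k'\cdot G)\circ k^{-1}$ as established in \eqref{eq:konfGen}. Second, observe that for $f\in\cH^p(\Omega)$ with least harmonic majorant $u$ for $|f|^p$, the function $|f\circ\varphi_t|^p = |f|^p\circ\varphi_t$ is majorized by $u\circ\varphi_t$, which is again harmonic on $\Omega$ (harmonicity is preserved under holomorphic precomposition); hence $f\circ\varphi_t\in\cH^p(\Omega)$, giving invariance directly without even passing to $\D$ — this is cleaner. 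Third, for strong continuity, transfer to $\D$: the map $f\mapsto ((k^{-1})')^{1/p}(f\circ k^{-1})$ carries $(T_t)_t$ on $\cH^p(\Omega)$ to the semigroup $g\mapsto ((k^{-1})')^{1/p}\cdot((k^{-1})'\circ\psi_t)^{-1/p}\cdot (g\circ\psi_t)$ on $\cH^p(\D)$, i.e. a weighted composition semigroup whose weight $w_t = (((k^{-1})'\circ\psi_t)/(k^{-1})')^{-1/p}$ satisfies $w_0\equiv 1$ and $w_t\to 1$ locally uniformly as $t\to0^+$; since unweighted composition semigroups are strongly continuous on $\cH^p(\D)$ and the weights converge appropriately, the weighted semigroup is strongly continuous too. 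Alternatively, and perhaps more simply, one establishes strong continuity on $\cH^p(\Omega)$ directly by the same density argument: polynomials (or rather, functions analytic in a neighborhood of $\bar\Omega$, or images of polynomials under $k$) are dense in $\cH^p(\Omega)$, for such nice $f$ one has $\|f\circ\varphi_t - f\|_{\cH^p(\Omega)}\to0$ by dominated convergence using the boundary values, and a uniform bound $\sup_{t\in[0,1]}\|T_t\|<\infty$ (which itself follows from Littlewood subordination transferred to $\Omega$, or from the operator norm bound on $\D$) lets one pass to the dense closure by the standard $\varepsilon/3$ argument.

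The main obstacle I anticipate is the bookkeeping around the weight factor $((k^{-1})')^{1/p}$: one needs $k'$ to be bounded and bounded away from zero, or at least integrable in an appropriate sense, for the transfer to be clean — and in general for a merely simply connected Jordan domain $k'$ need not be bounded near $\partial\Omega$. This is exactly why the cleaner route is to prove invariance intrinsically on $\Omega$ via harmonic majorants (which needs no regularity of $\partial\Omega$ at all) and to prove strong continuity via the density/uniform-boundedness argument rather than via an explicit isometry. The uniform boundedness $\sup_{0\le t\le 1}\|T_t\|_{\cH^p(\Omega)}<\infty$ can be obtained by noting that $T_t$ maps the majorant $u$ of $|f|^p$ to the majorant $u\circ\varphi_t$ of $|f\circ\varphi_t|^p$ and comparing values at the base point $z_0$ using Harnack's inequality (the point $\varphi_t(z_0)$ stays in a fixed compact subset of $\Omega$ for $t\in[0,1]$), which yields $\|T_t f\|_{\cH^p(\Omega)}^p = u_t(z_0)\le u(\varphi_t(z_0))\le C\, u(z_0) = C\|f\|_{\cH^p(\Omega)}^p$ with $C$ independent of $t$ and $f$; here $u_t$ denotes the least harmonic majorant of $|f\circ\varphi_t|^p$. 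With invariance, uniform boundedness, and density in hand, strong continuity follows by the routine three-$\varepsilon$ estimate, completing the proof.
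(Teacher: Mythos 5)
Your proof is essentially correct, but it takes a more intrinsic route than the paper and rests on one misconception worth flagging. The "obstacle" you anticipate --- the weight $((k^{-1})')^{1/p}$ --- is not there for the definition of $\cH^{p}(\Omega)$ used here. The weighted map $f\mapsto((k^{-1})')^{1/p}\,(f\circ k^{-1})$ is the isometry for the Smirnov-type space defined by integrals over level curves; for the harmonic-majorant definition the correct statement (Duren, Cor.\ to Thm.\ 10.1) is that $f\in\cH^{p}(\Omega)$ iff $f\circ k^{-1}\in\cH^{p}(\D)$, with \emph{no} derivative factor, and if one normalizes $k^{-1}(0)=z_{0}$ this correspondence is isometric. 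Also your claim that the majorant "pulls back" under the weighted map is false as stated, since $u\circ k^{-1}$ majorizes $|f\circ k^{-1}|^{p}$ but not $|(k^{-1})'|\,|f\circ k^{-1}|^{p}$. This is exactly the point of the paper's remark following the proposition: the harmonic-majorant definition is chosen precisely so that composition with $k^{-1}$ is already the right isomorphism, and the paper's proof then consists of transferring both invariance (via Littlewood subordination) and strong continuity verbatim to the disk. Your alternative, intrinsic arguments are nevertheless correct and self-contained: invariance because $u\circ\varphi_{t}$ is a harmonic majorant of $|f\circ\varphi_{t}|^{p}$; uniform boundedness of $\|T_{t}\|$ for $t\in[0,1]$ via $u_{t}(z_{0})\le u(\varphi_{t}(z_{0}))\le C\,u(z_{0})$ by Harnack on the compact orbit $\{\varphi_{t}(z_{0}):t\in[0,1]\}$; and strong continuity by density of $\{P\circ k: P \text{ polynomial}\}$ plus the three-$\varepsilon$ argument. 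What your approach buys is independence from the boundary regularity of $\Omega$ and an explicit operator-norm bound; what it costs is length, and the convergence step for the dense class still reduces to the known disk result (your appeal to "dominated convergence using the boundary values" only makes sense after transferring to $\D$, since a general simply connected $\Omega$ need not have usable boundary values). Once the unweighted conformal invariance is in hand, the paper's two-line transfer is the shorter path.
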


\begin{proof}
Let $k\colon\Omega\rightarrow\D$ be conformal.
Then there exists a semiflow $(\psi_{t})_{t}$ in $\cH(\D)$ such
that $(\varphi_{t})_{t}=(k^{-1}\circ\psi_{t}\circ k)_{t}$. By \cite[Cor. to Thm. 10.1]{Dur70},
$f\in\cH^{p}(\Omega)$ if and only if$f\circ k^{-1}\in\mathcal{H}^{p}(\D$).
This and Littlewood's subordination principle gives invariance since
\[
|f\circ\varphi_{t}|^{p}=|\underset{\in\cH^{p}(\Omega)}{\underbrace{\underset{\in\cH^{p}(\D)}{\underbrace{\underset{\in\cH^{p}(\D)}{\underbrace{f\circ k^{-1}}\circ\psi_{t}}}}\circ k}|^{p}.}
\]
Without loss of generality, we assume that $k^{-1}(0)=z_{0}$. Then,
by \cite[p. 168]{Du04}, we have
\[
\left\Vert f\circ\varphi_{t}-f\right\Vert _{\cH^{p}(\Omega)}=\left\Vert f\circ\varphi_{t}\circ k^{-1}-f\circ k^{-1}\right\Vert _{\cH^{p}(\D)}\overset{t\to0^{+}}{\to}0.
\]

\end{proof}

\begin{remark}
If we were using the definition of Hardy spaces by approximating level
curves (sometimes called Hardy-Smirnov spaces), the last proof would
involve boundary values of conformal maps. This would have forced
us to prescribe conditions concerning the boundary of $\Omega$. Therefore
it seems more appropriate to define Hardy spaces via harmonic majorants.
\end{remark}
\begin{proposition}
\label{prop:SGBergman}Let $\Omega\subsetneq\C$ be a Jordan domain.
Let $(\varphi_{t})_{t}$ be a semiflow of holomorphic functions in
$\cH(\Omega)$ generated by $G$. The Bergman space
$\A^{p}(\Omega)$ ($p\in[1,\infty)$) is $(G)$-admissible.
\end{proposition}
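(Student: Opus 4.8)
The plan is to transfer the problem to the unit disk by a Riemann map, just as in Proposition~\ref{prop:SGHardy}; the only new feature is that the conformal change of variables now carries a Jacobian weight. Fix a conformal $k\colon\Omega\to\D$. Since $k^{-1}\colon\D\to\Omega$ is univalent, $(k^{-1})'$ is zero‑free on the simply connected domain $\D$, so $\big((k^{-1})'\big)^{2/p}$ is a well‑defined holomorphic function on $\D$, and the change‑of‑variables formula $\d A(w)=|(k^{-1})'(z)|^{2}\,\d A(z)$, $w=k^{-1}(z)$, shows that
\[
\Lambda\colon\A^{p}(\Omega)\to\A^{p}(\D),\qquad \Lambda f=(f\circ k^{-1})\cdot\big((k^{-1})'\big)^{2/p},
\]
is an isometric isomorphism. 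Let $(\psi_{t})_{t}=(k\circ\varphi_{t}\circ k^{-1})_{t}$ be the induced semiflow in $\D$. From $\varphi_{t}\circ k^{-1}=k^{-1}\circ\psi_{t}$ one computes
\[
\Lambda(T_{t}f)=W_{t}\cdot\big((\Lambda f)\circ\psi_{t}\big),\qquad W_{t}=\left(\frac{(k^{-1})'}{(k^{-1})'\circ\psi_{t}}\right)^{2/p},
\]
where $W_{t}$ is holomorphic and zero‑free on $\D$ with $|W_{t}|^{p}=\omega/(\omega\circ\psi_{t})$, $\omega:=|(k^{-1})'|^{2}$. Hence it suffices to show that the weighted composition semigroup $\widetilde T_{t}g=W_{t}\,(g\circ\psi_{t})$ leaves $\A^{p}(\D)$ invariant, is uniformly bounded for $t$ in compact subsets of $[0,\infty)$, and is strongly continuous. (Equivalently, via the isometry $f\mapsto f\circ k^{-1}$ onto the weighted Bergman space $\A^{p}_{\omega}(\D)$ the operator $T_{t}$ becomes the ordinary composition operator $C_{\psi_{t}}$, and one must show $(C_{\psi_{t}})_{t}$ is strongly continuous there.)

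For the norm estimate I would run the classical boundedness argument for composition operators on $\A^{p}(\D)$. Since $|g|^{p}$ is subharmonic, the sub‑mean‑value inequality over pseudohyperbolic disks gives $|g(\zeta)|^{p}\lesssim(1-|\zeta|^{2})^{-2}\int_{E(\zeta)}|g|^{p}\,\d A$ for holomorphic $g$, where $E(\zeta)$ is the pseudohyperbolic disk of radius $\tfrac12$ about $\zeta$ and $\operatorname{Area}(E(\zeta))\asymp(1-|\zeta|^{2})^{2}$. Applying this with $\zeta=\psi_{t}(z)$, multiplying by $|W_{t}(z)|^{p}$, integrating in $z$, interchanging the order of integration, and using that $\omega$ and $z\mapsto 1-|z|^{2}$ are each comparable along any pseudohyperbolic disk of radius $\tfrac12$, the bound $\|\widetilde T_{t}g\|_{\A^{p}(\D)}\lesssim\|g\|_{\A^{p}(\D)}$ is reduced to a Carleson‑type estimate which, after transferring back by $k$, reads
\[
\operatorname{Area}\big(\varphi_{t}^{-1}(B)\big)\;\lesssim\;\operatorname{Area}(B)
\]
for hyperbolic balls $B\subset\Omega$ of a fixed radius, with a constant locally bounded in $t$. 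Here one uses that $\omega=|(k^{-1})'|^{2}$ is a regular weight: by the Koebe distortion theorem $\omega(z)\asymp\dist(k^{-1}(z),\partial\Omega)^{2}(1-|z|^{2})^{-2}$, and since $k^{-1}$ is a hyperbolic isometry this forces $\omega(z)\asymp\omega(z')$ whenever $z,z'$ lie in a common pseudohyperbolic disk of radius $\tfrac12$. The displayed inequality is where the genuine work lies: $\varphi_{t}$ is a \emph{univalent} self‑map of $\Omega$ (recorded in Section~2 from the theory of ODEs), hence a biholomorphism, and so a hyperbolic isometry, of $\Omega$ onto $\Omega_{t}:=\varphi_{t}(\Omega)$; thus $\varphi_{t}^{-1}(B)$ has the same hyperbolic diameter in $\Omega$ as $B\cap\Omega_{t}$ has in $\Omega_{t}$, and it remains to control the latter, i.e.\ to rule out that $\Omega_{t}$ becomes arbitrarily thin. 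For images of semiflows this can be read off from the Berkson--Porta representation (equivalently, from the Koenigs domain of $(\varphi_{t})_{t}$); alternatively one may invoke known boundedness results for composition operators on Bergman spaces of Jordan domains, the geometric route being worth spelling out because it also delivers the local boundedness in $t$ of the operator norms.

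Finally, strong continuity follows by the usual density argument. For a Jordan, hence Carath\'eodory, domain the polynomials are dense in $\A^{p}(\Omega)$ (Farrell--Markushevich). If $f$ is a polynomial, then $f$ is bounded on $\overline{\Omega}$, so $|f\circ\varphi_{t}-f|^{p}\le 2^{p}\big((\sup_{\overline{\Omega}}|f|)^{p}+|f|^{p}\big)\in L^{1}(\Omega)$ uniformly in $t$, while $f\circ\varphi_{t}\to f$ pointwise on $\Omega$ because $\varphi_{t}(z)\to z$ as $t\to0^{+}$; dominated convergence then yields $\|T_{t}f-f\|_{\A^{p}(\Omega)}\to0$. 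Combining this with the uniform bound $\sup_{0\le t\le1}\|T_{t}\|<\infty$ from the previous paragraph, an $\varepsilon/3$‑argument gives $\|T_{t}f-f\|_{\A^{p}(\Omega)}\to0$ for every $f\in\A^{p}(\Omega)$. Hence $\A^{p}(\Omega)$ is $(G)$‑admissible. The step I regard as genuinely delicate is the Carleson/thinness estimate of the second paragraph; the rest is a routine adaptation of the unit‑disk case and of the proof of Proposition~\ref{prop:SGHardy}.
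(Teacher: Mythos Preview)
Your route is substantially different from, and far more elaborate than, the paper's. The paper never passes to a weighted Bergman space: it simply records that $k'$ is non-vanishing on $\bar\Omega$ (citing \cite[Thm.~6.8]{Po92}), so that $|(k^{-1})'|^2$ is bounded above and below on $\D$. With that, the plain change of variables
\[
\int_\Omega|f\circ\varphi_t|^p\,\d A=\int_\D|f\circ k^{-1}\circ\psi_t|^p\,|(k^{-1})'|^2\,\d A
\]
is sandwiched between constant multiples of $\int_\D|f\circ k^{-1}\circ\psi_t|^p\,\d A$, and likewise $\|f\circ k^{-1}\|_{\A^p(\D)}\asymp\|f\|_{\A^p(\Omega)}$. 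Invariance then follows at once from Littlewood's subordination principle on the \emph{unweighted} $\A^p(\D)$, and strong continuity from the known strong continuity of $(C_{\psi_t})_t$ there. No Carleson measures, Koebe distortion, or hyperbolic geometry enter.

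By insisting on the exact isometry $\Lambda$ and working in $\A^p_\omega(\D)$ with $\omega=|(k^{-1})'|^2$, you effectively decline to use that $\omega$ is bounded above and below. That could have merit if one doubted this boundedness for general Jordan domains, but then the crux of your argument---the area inequality $\operatorname{Area}(\varphi_t^{-1}(B))\lesssim\operatorname{Area}(B)$ with a locally bounded constant---is never actually proved: you say it ``can be read off from the Berkson--Porta representation'' or from ``known boundedness results'', without supplying either. Your hyperbolic sketch does not close it, for exactly the reason you name: since $\Omega_t\subsetneq\Omega$, the $\Omega_t$-hyperbolic diameter of $B\cap\Omega_t$ can exceed the $\Omega$-hyperbolic diameter of $B$, so $\varphi_t^{-1}(B)$ is not controlled. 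That is a genuine gap. Either follow the paper and use the two-sided bound on $k'$, which collapses the whole weighted discussion into the unweighted disk case, or actually carry out the Carleson estimate in the weighted space---which would require exploiting the semiflow structure in an essential way, since the bound fails for general weights and general univalent self-maps. Your strong-continuity paragraph (density of polynomials plus dominated convergence) is fine and is essentially how the disk case is proved anyway.
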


\begin{proof}
Let $k\colon\Omega\rightarrow\D$ be conformal.
Then there exists a semiflow $(\psi_{t})_{t}$ in $\cH(\D)$ such
that $(\varphi_{t})_{t}=(k^{-1}\circ\varphi_{t}\circ k)_{t}$. Thus,
for $f\in\A^{p}(\Omega)$, 
\begin{align*}
\int_{\Omega}|f\circ\varphi_{t}|^{p} &= \int_{\D}|f\circ\varphi_{t}\circ k^{-1}|^{p}|\frac{1}{k'}|^{2}\\
 &\le C\int_{\D}|f\circ k^{-1}\circ\psi_{t}|^{p}.
\end{align*}
The derivative of $k$ is non-vanishing in $\bar{\Omega}$, see \cite[Thm. 6.8]{Po92}.

For invariance, we only need to show that $f\circ k^{-1}\in\A^{p}(\D)$.
Indeed, 
\[
\int_{\D}|f\circ k^{-1}|^{p}\d A=\int_{\Omega}|f|^{p}|k'|^{2}\d A\le C\|f\|_{\A^{p}(\Omega)}^{p}<\infty.
\]
 Now Littlewood's subordination principle yields invariance.

By the same calculation, we obtain strong continuity
of $(T_{t})_t$ on $\A^{p}(\Omega)$ from strong continuity on $\A^{p}(\D)$.
\end{proof}
Further examples of holomorphic function spaces
on the unit disk which appear in the literature concerning semigroups
of composition operators are the Bloch space $\mathcal{B}$ and the
space BMOA as well as their subspaces $\mathcal{B}_{0}$ and VMOA.
On these spaces the question of strong continuity is much more delicate,
and in fact there is no nontrivial strongly continuous semigroup on
$\mathcal{B}$ and BMOA. So in these cases, one is studying so-called
maximal subspaces of strong continuity denoted by $\left[\varphi_{t},\mathcal{B}\right]$
and $\left[\varphi_{t},\text{BMOA}\right]$ such that a given semiflow
$(\varphi_{t})_{t}$ defines a strongly continuous semigroup of composition
operators on $\left[\varphi_{t},\mathcal{B}\right]$ resp. $\left[\varphi_{t},\text{BMOA}\right]$.
In \cite{Bl13} it has been shown that $\B_{0}\subseteq[\varphi_{t},\B]\subsetneq\B$
, and in the recent paper \cite{AMW17} the analogous result for BMOA
has been obtained, that is, $\mathrm{VMOA\subseteq}\left[\varphi_{t},\mathrm{BMOA}\right]\subsetneq\mathrm{BMOA}.$ 

It is also natural to consider weighted semigroups of composition
operators. Let $\Omega\subsetneq\C$ be simply connected. Let $\omega:\Omega\to\C$
be holomorphic. For $t\in\R_{+}$ we define a weight as follows
\begin{equation}
m_{t}=\frac{\omega(\varphi_{t})}{\omega}.\label{eq:weight}
\end{equation}
For a family of composition operators $(T_{t})_{t\ge0}$ on $\cH(\Omega,\C)$
with semiflow $\varphi_{t}\in\cH(\Omega)$, we define a family of
weighted composition operators as follows

\begin{align}
S_{t}\colon\cH(\Omega,\C) \to \cH(\Omega,\C)\nonumber \\
f &\mapsto m_{t}\cdot T_{t}f.\label{eq:WSG}
\end{align}
This is again an operator semigroup on $\cH(\Omega,\C)$ and also
on $h(\Omega,\C)$ but the question of strong continuity is more difficult
since it depends heavily on the choice of $\omega$. 

Special weights we are interested in are so-called cocycles.
\begin{definition}
Let $(\varphi_{t})_{t}$ be a semiflow in $\mathcal{H}(\Omega,\Omega)$.
A family of holomorphic functions $m_{t}\colon\Omega\rightarrow\C,t\geq0,$
is called cocycle if
\begin{enumerate}
\item $m_{0}(z)=1,\,z\in\Omega$, 
\item $m_{s+t}(z)=(m_{s}\cdot m_{t})(\varphi_{t}(z))$ for all $t,s\geq0$
and $z\in\Omega$, 
\item $t\mapsto m_{t}(z)$ is continuous for every $z\in\Omega$. 
\end{enumerate}
\end{definition}

If there exists a holomorphic function $w\colon\Omega\rightarrow\C$
such that $m_{t}(z)=\frac{w(\varphi_{t}(z))}{w(z)},\,z\in\Omega,$
then the family $(m_{t})_{t}$ is called a coboundary of $(\varphi_{t})_{t}$.

It is easy to see that a family of cocycle weighted composition operators
is also an operator semigroup on $\mathbb{H}(\Omega,\C)$. Moreover,
given an arbitrary holomorphic function $g:\Omega\to\C$, we can easily
construct a cocycle to a semiflow $(\varphi_{t})_{t}$: for $t\ge0$, 

\begin{equation}
m_{t}(z)=\exp\left(\int_{0}^{t}g(\varphi_{s}(z))ds\right)\quad(z\in\Omega)\label{eq:cocy}
\end{equation}
is a cocycle. 
\begin{definition}
\label{def:admsibble}Let $(S_{t})_{t\ge0}$ be a weighted semigroup
of composition operators on $\mathbb{H}(\Omega,\C)$, cf. (\ref{eq:WSG}),
with semiflow generated by the holomorphic function $G:\Omega\to\C$
and cocycle weight in terms of a holomorphic function $g:\Omega\to\C$,
see (\ref{eq:cocy}) . A Banach space $X\subset\mathbb{H}(\Omega,\C)$
is called $(g,G)$-admissible if it satisfies the following two conditions:
\begin{enumerate}
\item [(i)] $X$ is invariant under
$S_{t},$ i.e., $S_{t}X\subset X$ for all $t\ge0$.
\item [(ii)] $(S_{t})_{t}$ is strongly
continuous on $X$.
\end{enumerate}
\end{definition}
Let $X\subset\cH(\Omega,\C)$ be $(g,G)$-admissible. Then the generator $\Gamma$ of $(S_{t})_{t\ge0}$ is given by 
\[
\Gamma f=g\cdot f+G\cdot f'\quad(f\in\text{dom}\Gamma).
\]

\subsubsection*{Examples}

In \cite[Theorem 2]{Ko90} it has been shown that for certain holomorphic
functions $g:\Omega\to\C$ and their associated cocycles $m_{t}$
as in (\ref{eq:cocy}), and a semiflow $(\varphi_{t})_{t}$ generated by
$G:\Omega\to\C$,the Hardy space $\cH^{p}(\D)\,(p\in[1,\infty))$
is $(g,G$-admissible in the sense of Definition \ref{def:admsibble}. By a slight adjustment of
the arguments in Proposition \ref{prop:SGHardy}, we obtain the result
for Hardy spaces over simply connected sets.
\begin{lemma}
Let $\Omega\subsetneq\C$ be simply connected. Let $g\colon\Omega\rightarrow\C$
be a holomorphic function such that $\underset{z\in\Omega}{\sup}\Re g(z)<\infty$,
and let $(\varphi_{t})_{t}$ be a semiflow in $\mathcal{H}(\Omega)$
with generator $G$. Then $\mathcal{\cH}^{p}(\Omega)\,(p\in[1,\infty))$
is $(g,G)-$admissible. 
\end{lemma}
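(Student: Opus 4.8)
The plan is to transfer the problem to the unit disk via a conformal map and then invoke the cited result \cite[Theorem 2]{Ko90} for $\cH^p(\D)$, exactly as in the proof of Proposition \ref{prop:SGHardy}. Let $k\colon\Omega\to\D$ be conformal, so there is a semiflow $(\psi_t)_t$ in $\cH(\D)$ with $(\varphi_t)_t=(k^{-1}\circ\psi_t\circ k)_t$; denote by $\tilde G=(k'\cdot G)\circ k^{-1}$ its generator, as in \eqref{eq:konfGen}. Set $\tilde g:=g\circ k^{-1}\colon\D\to\C$, which is holomorphic with $\sup_{w\in\D}\Re\tilde g(w)=\sup_{z\in\Omega}\Re g(z)<\infty$. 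The associated cocycle on $\D$ is $\tilde m_t(w)=\exp\bigl(\int_0^t\tilde g(\psi_s(w))\,\d s\bigr)$, and a change of variables $s\mapsto s$ in the defining integral shows $\tilde m_t\circ k=m_t$, since $\tilde g(\psi_s(k(z)))=g(k^{-1}(\psi_s(k(z))))=g(\varphi_s(z))$. Thus the weighted composition semigroup $(S_t)_t$ on $\cH(\Omega,\C)$ is conjugated by $f\mapsto f\circ k^{-1}$ to the weighted composition semigroup $(\tilde S_t)_t$ on $\cH(\D,\C)$ built from $\tilde g$ and $(\psi_t)_t$: indeed $(S_tf)\circ k^{-1}=(m_t\cdot(f\circ\varphi_t))\circ k^{-1}=\tilde m_t\cdot\bigl((f\circ k^{-1})\circ\psi_t\bigr)=\tilde S_t(f\circ k^{-1})$.

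Next I would recall, as in Proposition \ref{prop:SGHardy}, that by \cite[Cor. to Thm. 10.1]{Dur70} the map $f\mapsto f\circ k^{-1}$ is an isometric isomorphism of $\cH^p(\Omega)$ onto $\cH^p(\D)$ (after normalizing $k^{-1}(0)=z_0$), so $(g,G)$-admissibility of $\cH^p(\Omega)$ is equivalent to $(\tilde g,\tilde G)$-admissibility of $\cH^p(\D)$. The latter is precisely what \cite[Theorem 2]{Ko90} provides, once we check that $\tilde g$ meets Kontoulis's hypothesis; since his condition is exactly $\sup_{\D}\Re\tilde g<\infty$ (this is the standing assumption under which the weighted semigroup leaves $\cH^p(\D)$ invariant and is strongly continuous), the hypothesis transfers verbatim from the hypothesis on $g$. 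Invariance of $\cH^p(\Omega)$ under $S_t$ then follows from invariance of $\cH^p(\D)$ under $\tilde S_t$ via the isometry, and strong continuity $\|S_tf-f\|_{\cH^p(\Omega)}=\|\tilde S_t(f\circ k^{-1})-f\circ k^{-1}\|_{\cH^p(\D)}\to 0$ as $t\to0^+$ follows likewise.

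\emph{Main obstacle.} The only genuinely delicate point is verifying that the hypothesis actually needed in \cite[Theorem 2]{Ko90} is exactly $\sup_{\D}\Re\tilde g<\infty$ and nothing more (for instance, that no growth or regularity condition on $\tilde G$ is secretly imposed), and that this condition is conformally invariant in the precise sense used above — i.e., that $\sup_\Omega\Re g<\infty$ is equivalent to $\sup_\D\Re\tilde g<\infty$, which is immediate here because $\Re\tilde g=\Re(g\circ k^{-1})$ and $k^{-1}$ is a bijection $\D\to\Omega$. If Kontoulis's theorem is stated with additional assumptions on the weight, one would need to check these descend through the conjugation; but since $\tilde g$ is obtained from $g$ by composition with a conformal bijection, any sup-type or $L^p$-type condition is preserved, and the cocycle structure is preserved by the identity $\tilde m_t\circ k=m_t$ established above. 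Everything else is the same bookkeeping as in Proposition \ref{prop:SGHardy}.
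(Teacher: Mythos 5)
Your proof is correct and takes essentially the same route as the paper: conjugate everything to the unit disk via the conformal map (with the cocycle transferring as $\tilde m_t\circ k=m_t$) and invoke the known disk result. The paper phrases invariance slightly more directly, via boundedness of $m_t$ together with Proposition \ref{prop:SGHardy}, and cites \cite[Theorem 1]{Sis86} for strong continuity, but the substance is identical (minor note: the author of \cite{Ko90} is K\"onig, not ``Kontoulis'').
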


\begin{proof}
Invariance follows by boundedness of $m_{t}$ and Proposition \ref{prop:SGHardy}.
To show strong continuity, we use the same technique as in Proposition
\ref{prop:SGHardy}, too. Since the real part of $g\circ k^{-1}$ is
bounded as well, we obtain the assertion from \cite[Theorem 1]{Sis86}.
\end{proof}
Indeed, the proof of \cite[Theorem 1]{Sis86} works as well for a
family of $m_{t}$-weighted composition operators on the Bergman space
$\mathcal{A}^{p}(\Omega)$, where $\Omega$ is a Jordan domain. 

\begin{lemma}
\label{lem:scon}Let $\Omega\subsetneq\C$ be a Jordan domain. Let
$g\colon\Omega\rightarrow\C$ be a holomorphic function such that
$\underset{z\in\Omega}{\sup}\Re g(z)<\infty$, and let $(\varphi_{t})_{t}$
be a semiflow in $\mathcal{H}(\Omega)$ with generator $G$. Then
\textup{$\mathcal{A}^{p}(\Omega)\,(p\in[1,\infty))$ is $(g,G)$-admissible. }
\end{lemma}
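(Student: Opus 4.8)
The plan is to reduce the Bergman-space statement for a Jordan domain $\Omega$ to the already-known unit-disk case via the conformal transfer already exploited in Proposition~\ref{prop:SGBergman}. Fix a conformal map $k\colon\Omega\to\D$; write $\psi_t=k\circ\varphi_t\circ k^{-1}$, so that $(\psi_t)_t$ is a semiflow in $\D$ whose generator is $\widetilde G=(k'\cdot G)\circ k^{-1}$, by \eqref{eq:konfGen}. Set $\tilde g=g\circ k^{-1}$; since $k^{-1}$ maps $\D$ into $\Omega$, the hypothesis $\sup_{z\in\Omega}\Re g(z)<\infty$ immediately gives $\sup_{w\in\D}\Re\tilde g(w)<\infty$. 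Let $\tilde m_t(w)=\exp\bigl(\int_0^t\tilde g(\psi_s(w))\,\d s\bigr)$ be the associated cocycle on $\D$ and $m_t$ the one on $\Omega$; a change of variables $s\mapsto s$ inside the integral together with $\psi_s(k(z))=k(\varphi_s(z))$ shows $\tilde m_t\circ k=m_t$ on $\Omega$. Thus the weighted composition operator $S_t$ on functions over $\Omega$ is conjugated, under $f\mapsto f\circ k^{-1}$, to the weighted composition operator $\tilde S_t\colon h\mapsto \tilde m_t\cdot(h\circ\psi_t)$ on functions over $\D$.

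The second step is the norm comparison. For $f\in\cH(\Omega,\C)$, the substitution $w=k(z)$ gives
\[
\int_\Omega |f|^p\,\d A=\int_\D |f\circ k^{-1}|^p\,\Bigl|\tfrac{1}{k'(k^{-1}(w))}\Bigr|^2\,\d A(w),
\]
and since $\Omega$ is a Jordan domain, $k'$ is bounded above and below away from zero on $\bar\Omega$ (equivalently $(k^{-1})'$ is bounded above and below on $\bar\D$), by \cite[Thm.~6.8]{Po92}. Hence there is a constant $C\ge 1$ with
\[
C^{-1}\|f\|_{\A^p(\Omega)}\le \|f\circ k^{-1}\|_{\A^p(\D)}\le C\|f\|_{\A^p(\Omega)},
\]
so $f\in\A^p(\Omega)$ if and only if $f\circ k^{-1}\in\A^p(\D)$, and the map $f\mapsto f\circ k^{-1}$ is an isomorphism of Banach spaces $\A^p(\Omega)\to\A^p(\D)$. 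Invariance of $\A^p(\Omega)$ under $S_t$ then follows from invariance of $\A^p(\D)$ under $\tilde S_t$, which is the unit-disk result quoted just before the statement (the extension of \cite[Thm.~1]{Sis86} to $m_t$-weighted composition operators on $\A^p(\D)$), using that $\sup_{\D}\Re\tilde g<\infty$. For strong continuity, the conjugation and the two-sided norm estimate give
\[
\|S_t f-f\|_{\A^p(\Omega)}\le C\,\|\tilde S_t(f\circ k^{-1})-f\circ k^{-1}\|_{\A^p(\D)}\xrightarrow{\;t\to0^+\;}0
\]
for every $f\in\A^p(\Omega)$, again by the unit-disk result. Together, (i) and (ii) of Definition~\ref{def:admsibble} hold, which is the claim.

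The only genuinely non-routine input is the unit-disk fact that $\A^p(\D)$ is $(\tilde g,\widetilde G)$-admissible, i.e. that the argument of \cite[Theorem~1]{Sis86}, originally for the Hardy space, carries over to weighted composition semigroups on the Bergman space; the excerpt asserts this in the sentence preceding the lemma, so here I would simply invoke it. The rest is bookkeeping: checking that the conformal change of variables really intertwines $S_t$ with $\tilde S_t$ (which reduces to $\psi_s\circ k=k\circ\varphi_s$ and the definition of the cocycle) and that the Jordan hypothesis is exactly what makes $|k'|$ bounded from above and below so that the $L^p(\D,\d A)$- and $L^p(\Omega,\d A)$-norms of $f\circ k^{-1}$ and $f$ are comparable. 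I expect the main point worth stating carefully is the cocycle identity $m_t=\tilde m_t\circ k$, since it is what guarantees that the conjugate of the \emph{weighted} operator is again the \emph{natural} weighted operator on $\D$ rather than some other weight.
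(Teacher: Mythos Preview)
Your argument is correct and follows the same overall strategy as the paper: reduce to $\A^p(\D)$ via a conformal map and the norm comparison afforded by the boundedness of $|k'|$ and $|1/k'|$, then appeal to the unit-disk case. The difference is one of emphasis. You spell out the conformal transfer carefully---in particular the cocycle identity $m_t=\tilde m_t\circ k$, which the paper leaves implicit---and then invoke the $\A^p(\D)$ result as a black box (relying on the sentence preceding the lemma). The paper does the opposite: it dismisses the reduction in one line (``apply the same technique as in Proposition~\ref{prop:SGBergman}'') and instead \emph{proves} the unit-disk case in detail, adapting Siskakis's argument from \cite[Theorem~1]{Sis86}: boundedness of $S_t$ via Littlewood subordination and $\|m_t\|_\infty<\infty$; for $p>1$, a weak-compactness argument using reflexivity of $\A^p$ together with $\limsup_{t\to0}\|m_t\|_\infty\le1$ (from \cite[Lemma~3.1]{Ko90}) and lower semicontinuity of the norm; for $p=1$, density of $\A^q$ ($q>1$) in $\A^1$. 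So while your proof is complete \emph{given} the disk result, the paper regards that result as the substantive content of the lemma and supplies its proof.
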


\begin{proof}
It suffices to prove the statement for $\A^{p}(\D)=\A^{p}$ and then
apply the same technique as in Proposition \ref{prop:SGBergman}.
Due to Siskakis \cite[Theorem 1]{Sis86}, strong continuity for a
weighted SGCO on $\mathcal{H}^{p}(\D)$ is achieved if $\limsup_{t\to0}\|m_{t}\|_{\infty}\leq1$
which is satisfied by our assumptions on $g$, see \cite[Lemma 3.1]{Ko90}.
To prove the assertion, we can simply follow the steps in the proof
of \cite[Theorem 1]{Sis86}. \\
For all $t\geq0$ we have $m_{t}\in\mathcal{H}^{\infty}(\D)$. This
and the cocycle properties yield that $(S_{t})_{t}$ defines a family
of bounded operators on $\mathcal{A}^{p}$. Let $f\in\mathcal{A}^{p}$.
By Littlewood's subordination principle we get that 
\begin{equation}
\|S_{t}f\|_{\A^{p}}^{p}\leq\|m_{t}\|_{\infty}\left(\frac{1+|\varphi_{t}(0)|}{1-|\varphi_{t}(0)|}\right)^{p}\|f\|_{\A^{p}}^{p},\label{eq:LSP}
\end{equation}
thus $\|S_{t}\|_{\mathcal{L}(\mathcal{A}^{p},\mathcal{A}^{p})}<\infty$
for all $t\ge0$.\\
First, we prove strong continuity if $p>1$. Let $(t_{n})_{n\in\mathbb{N}}$
be a sequence such that $t_{n}\overset{n\to\infty}{\to}0$. Then we
have $\limsup_{n\to\infty}\|S_{t_{n}}f\|_{2}\leq\|f\|_{2}$. Since
$\mathcal{A}^{p}$ is reflexive and by (\ref{eq:LSP}), after passing
to a subsequence again denoted by $(t_{n})_{n\text{\ensuremath{\in\mathbb{N}}}}$,
the sequence $(S_{t_{n}}f)_{t_{n}}$ is weakly convergent. The weak
limit is $f$ because $(S_{t_{n}}f(z))_{t_{n}}\to f$ for all $z\in\D$.
By lower-semicontinuity of the $\mathcal{A}^{p}$ norm, $\|f\|_{\mathcal{A}^{2}}\leq\liminf_{n\rightarrow\infty}\|S_{t_{n}}f\|_{\mathcal{A}^{p}}$,
and thus $\|S_{t_{n}}f\|_{\mathcal{A}^{p}}\to\|f\|_{\mathcal{A}^{p}}$. This yields the desired strong continuity.\\
To show strong continuity in the case $p=1,$ we
use that $\A^{q}$ ($q>1)$ is dense in $\A^{1}$. Let $\epsilon>0$.
For every $f\in\A^{1}$ there exists $g\in\A^{q}$ such that $\left\Vert f-g\right\Vert _{\A^{1}}<\frac{\epsilon}{(\left\Vert S_{t}\right\Vert +1)2}$. Moreover,
\begin{align*}
\left\Vert S_{t}f-f\right\Vert _{\A^{1}} &\le \left\Vert S_{t}f-S_{t}g\right\Vert _{A^{1}}+\left\Vert S_{t}g-g\right\Vert _{A^{1}}+\left\Vert f-g\right\Vert _{A^{1}}\\
 &\le \left\Vert S_{t}f-S_{t}g\right\Vert _{A^{1}}+\left\Vert S_{t}g-g\right\Vert _{A^{q}}+\left\Vert f-g\right\Vert _{A^{1}}\\
 &\le (\left\Vert S_{t}\right\Vert +1)\left\Vert f-g\right\Vert _{A^{1}}+\left\Vert S_{t}g-g\right\Vert _{A^{q}}.
\end{align*}
Since $q>1$, for all $\epsilon>0$ there exists a sufficiently small
$t>0$ such that $\left\Vert S_{t}g-g\right\Vert _{A^{q}}<\frac{\epsilon}{2}.$
Thus $\left\Vert S_{t}f-f\right\Vert _{\A^{1}}\to0$ as $t\to0^{+}$.\\
\end{proof}
\begin{remark}
Several authors are especially interested in semigroups of composition
operators weighted by the derivative of the semiflow $(\varphi_{t})_{t}$
with respect to the complex varibale, i.e., 
\[
S_{t}f:=\varphi'_{t}\cdot f\circ\varphi_{t}\quad(f\in X).
\]
See for example the recent paper \cite{ArOl17}.

Indeed, this weight is a cocycle given by 
\[
m_{t}(z)\colon=\varphi'_{t}(z)=\exp\left(\int_{0}^{t}G^{'}(\varphi_{s}(z))\d s\right).
\]
\end{remark}

\subsection*{Boundary spaces}

Finding boundary values of holomorphic functions is a fundamental
problem in function theory. Strong results concerning the boundary
values of functions in Hardy spaces are Fatou's theorem and the theorem
by F. and M. Riesz. But, in many spaces of holomorphic functions,
convergence to boundary values in a nontangential sense is a rather
strong condition. Therefore we consider boundary values in a weaker
sense, namly in the sense of distributions.

Let $\Omega\subsetneq\C$ be a Jordan domain. This restriction guarantees
existence and nonvanishing of boundary values of derivatives of conformal
maps defined on $\Omega$. Up to now, we are not sure if the established
theory works for rectifiable boundaries as well. 

In what follows, we are exploring boundary distributions of functions
in Banach spaces $X\subset\mathbb{H}(\Omega,\C)$. Our first aim is
to define the boundary space of $X$ consisting of appropriately defined
distributional boundary values of elements of $X$. 
\begin{definition}
\label{def:BS}Let $\Omega\subsetneq\C$ be a Jordan domain. Let $X\subset\mathbb{H}(\Omega,\C)$
be a Banach space. If for every $f\in X$ there exists a uniquely
defined boundary distribution $f^{*}\colon\partial\Omega\to\C$ in
the following sense 
\[
\lim_{r\to1^{-}}\int_{\partial\Omega}f_{r}\cdot\phi(x)\d x=\left\langle f^{*},\phi\right\rangle 
\]
for every $\phi\in C^{\infty}(\partial\Omega)$, where $f_{r}(z):=f(k^{-1}(rk(z))$,
and $k:\Omega\to\D$ is any conformal map, then we denote the set
consisting of all such boundary values by $\partial X$. If there
exists an isomorphism $\text{Tr}:X\to\partial X$, then $\partial X$
is called the boundary space corresponding to $X$.
Moreover, we define a norm on $\partial X$ by $\|f^{*}\|_{\partial X}=\|f\|_{X}$
for every $f^{*}\in\partial X$.
\end{definition}

\subsubsection*{Examples. }

A first (though artificial) example is the space $X=\mathcal{A}$
where $\mathcal{A}$ denotes the disk algebra. The restriction to
the boundary is an isometric homomorphism from $\mathcal{A}$ into
$C(\partial\D)$. So $\mathcal{A}$ is a Banach subalgebra of $C(\partial\D)$
which is even maximal due to Wermer's maximality theorem. Thus the
boundary space $\partial X$ can be defined as the space of continuous
functions on $\partial\D$ which are holomorphically extendable to
$\D$. \\
Let $p\in[1,\infty)$ and define $X$ as the Hardy space $\cH^{p}(\D)$.
Then it is well known that every function in $\cH^{p}(\D)$ has nontangential
limits a.e. and the boundary function is in $L^{p}(\partial\D)$.
For a comprehensive overview, we refer especially to \cite[Chapter 3]{Dur70}.
These boundary functions form a closed subspace of $L^{p}(\partial\D$)
which consists of those function in $L^{p}(\partial\D)$ with vanishing
negative Fourier coefficients. Note that this theory is almost applicable
when the analogously defined Hardy space $h^{p}$ of harmonic functions
is considered. However, the case $p=1$ appears to be different. The
boundary space on $h^{1}$ consists of finite Borel measures on the
unit circle. 

In both examples, the boundary space inherits some properties of the
underlying space of holomorphic functions. Moreover, by the Luzin-Privalov
theorem, a holomorphic function is in either case identically zero
if the boundary function vanishes on a set of positive measure. Given
a function in one of the two boundary spaces from the examples above,
we can recover the holomorphic function in $X$ via Cauchy's integral
formula and the Poisson integral as well which acts as an isometric
isomorphism between $X$ and $\partial X$.

\subsubsection*{Boundary distributions of Bergman functions.}

The theory of boundary values for functions in Hardy spaces on the
unit disc is well established. The question of boundary functions
is much more complicated if one wishes to work on Bergman spaces.
In fact, the Bergman spaces contain functions which do not admit nontangential
or radial limits almost everywhere, such as the Lacunary series. So
it seems more appropriate to define boundary values in the sense of
distributions. To establish such distributional boundary values, we
emphasize a connection between Hardy and Bergman spaces. For simplicity
we use the notation $\mathcal{A}^{p}:=\mathcal{A}^{p}(\D)$ and $\mathcal{H}^{p}:=\mathcal{H}^{p}(\D),p\geq1$.
The following theorem can be found in \cite[Lem. 4]{Du04}. 
\begin{theorem}
\label{thm:.p=00003D00003D1} \textup{If $f\in\mathcal{A}^{1}$ and
$F$ is an antiderivative of $f$, then $F\in\mathcal{H}^{1}$.}
\end{theorem}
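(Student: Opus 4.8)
The plan is to pass from the power-series/coefficient characterization of Bergman membership to a norm estimate on the integral means of the antiderivative. Writing $f(z)=\sum_{n\ge0}a_nz^n$ and $F(z)=\sum_{n\ge0}\frac{a_n}{n+1}z^{n+1}$, the task is to bound $\sup_{0<r<1}\int_0^{2\pi}|F(re^{it})|\,\d t$ in terms of $\|f\|_{\A^1}$. The standard device here is to relate the $\cH^1$-norm of $F$ to a weighted area integral of $f$ itself: since $F'=f$, one expects an inequality of the form $\|F\|_{\cH^1(\D)}\lesssim |F(0)|+\int_\D |f(z)|(1-|z|)\,\d A(z)$, and the right-hand side is dominated by $\|f\|_{\A^1}$ because the weight $(1-|z|)$ is bounded.

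\emph{Key steps, in order.} First I would fix $0<r<1$ and estimate $\int_0^{2\pi}|F(re^{it})|\,\d t$. One clean route is the identity $F(re^{it})=F(0)+\int_0^r f(\rho e^{it})e^{it}\,\d\rho$, which gives, by Fubini and the triangle inequality in $L^1(\d t)$,
\[
\int_0^{2\pi}|F(re^{it})|\,\d t \le 2\pi|F(0)| + \int_0^r\!\!\int_0^{2\pi}|f(\rho e^{it})|\,\d t\,\d\rho.
\]
Second, I would observe that $\int_0^1\!\int_0^{2\pi}|f(\rho e^{it})|\,\d t\,\d\rho$ is, up to the harmless factor $\rho$ in the area element, exactly $\|f\|_{\A^1}$ in polar coordinates; more precisely $\int_0^1\int_0^{2\pi}|f(\rho e^{it})|\,\d t\,\d\rho \le 2\int_\D|f|\,\d A \cdot C$ since $\rho^{-1}$ is not integrable near $0$ — so this naive splitting needs a small fix near the origin. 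The fix is to treat the disk $|z|\le\frac12$ separately, where $|f|$ is controlled by its $\A^1$-norm via the mean value property (subharmonicity of $|f|$), and to use $\rho\ge\frac12$ on the annulus $\frac12\le\rho<1$ so that $\d\rho\le 2\rho\,\d\rho$ there. Third, taking the supremum over $r\in(0,1)$ yields $\sup_r\int_0^{2\pi}|F(re^{it})|\,\d t<\infty$, which is precisely the statement $F\in\cH^1(\D)$.

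\emph{Main obstacle.} The only real subtlety is the non-integrability of $1/\rho$ at the origin when one tries to recognize the iterated integral $\int_0^1(\int_0^{2\pi}|f|\,\d t)\,\d\rho$ as a multiple of $\|f\|_{\A^1}=\int_\D|f|\,\d A$; handling the inner disk by subharmonicity (or equivalently by the crude bound $|f(z)|\le \frac{C}{(1-|z|)^2}\|f\|_{\A^1}$ for holomorphic $\A^1$ functions, restricted to $|z|\le\frac12$) removes it. Everything else is an application of Fubini's theorem and the triangle inequality, plus the elementary observation that $F(0)=0$ (or at worst $|F(0)|$ is a fixed finite quantity) so it contributes nothing to the growth of the means. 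Alternatively, one may cite the duality/Hardy-space identity $\|F\|_{\cH^1}\asymp|F(0)|+\int_\D|F''(z)|\log\frac1{|z|}\,\d A$ together with $F''=f'$ and an integration by parts, but the direct argument above is shorter and self-contained.
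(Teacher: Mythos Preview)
Your proposal is correct and follows essentially the same route as the paper. The paper actually cites the $p=1$ statement from Duren--Schuster without giving a proof, but it then proves the general $p\ge1$ version (Theorem~\ref{thm:anti}) by exactly your idea: write $F$ as a radial integral of $f$, split the radius at a fixed intermediate point $\varepsilon\in(0,1)$ (your choice is $\tfrac12$), recover the missing Jacobian factor $\rho$ on the outer annulus via $\d\rho\le\varepsilon^{-1}\rho\,\d\rho$, and treat the inner disk separately. The only cosmetic difference is in the inner estimate: the paper normalizes so that $F(0)=0$ and bounds $M_p(\varepsilon r,F)$ by writing $F(\varepsilon r e^{it})$ again as a radial integral, whereas you bound $|f|$ pointwise on $\{|z|\le\tfrac12\}$ by subharmonicity. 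Both are routine and yield the same conclusion.
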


For $p\in[1,\infty)$, Theorem \ref{thm:.p=00003D00003D1} can be
generalized to $f\in\mathcal{A}^{p}$ in the following way. 
\begin{theorem}
\label{thm:anti}Let $f\in\mathcal{A}^{p}\,(p\geq1)$ and $F$ an
antiderivative of $f$. Then $F\in\mathcal{H}^{p}$.
\end{theorem}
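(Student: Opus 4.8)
The plan is to bootstrap from the $p=1$ case in Theorem~\ref{thm:.p=00003D00003D1} by exploiting the power-series structure of the antiderivative together with a known characterization of Hardy-space membership in terms of area integrals of the derivative. First I would recall the Littlewood--Paley type identity: for $F\in\cH(\D,\C)$ with $F(0)=0$, one has $F\in\cH^p$ if and only if the Littlewood--Paley $g$-function, or equivalently the area function $A(F)(e^{i\theta})=\bigl(\int_{\Gamma(\theta)}|F'(z)|^2\,dA(z)\bigr)^{1/2}$ over a nontangential approach region $\Gamma(\theta)$, lies in $L^p(\partial\D)$; this is classical (see Duren~\cite{Dur70}). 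Since $F'=f\in\A^p$, the task reduces to showing that the area function built from $f$ is in $L^p(\partial\D)$ whenever $f$ is $p$-integrable against area measure on $\D$.

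The key estimate is pointwise: for a fixed aperture, the Carleson-box geometry of the nontangential cones $\Gamma(\theta)$ gives
\begin{equation}
\int_{\partial\D}A(F)(e^{i\theta})^p\,d\theta \;=\; \int_{\partial\D}\Bigl(\int_{\Gamma(\theta)}|f(z)|^2\,dA(z)\Bigr)^{p/2}d\theta \;\lesssim\; \int_{\D}|f(z)|^p\,dA(z),
\end{equation}
for $p\ge 2$ by an application of Minkowski's integral inequality (the $L^{p/2}$ norm of an integral is bounded by the integral of the $L^{p/2}$ norms) after interchanging the order of integration and using that $|\{\theta: z\in\Gamma(\theta)\}|\asymp (1-|z|)$. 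For $1\le p<2$ the same conclusion follows from a tent-space duality argument, or alternatively one can avoid the area function entirely: write $F(z)=\sum_{n\ge1}\frac{a_{n-1}}{n}z^n$ where $f(z)=\sum_{n\ge0}a_n z^n$, and use the weighted-coefficient comparison between $\A^p$ and $\cH^p$ together with a direct integration-by-parts / Hardy-inequality argument on the dyadic annuli $\{1-2^{-k}\le|z|<1-2^{-k-1}\}$, bounding the $\cH^p$ mean of $F$ on each circle $|z|=r$ by the $\A^p$ mass of $f$ on $\{|z|\le r\}$. A cleaner route that sidesteps both the range issue and the area function is to reduce to the already-proven case $p=1$ by a factorization: decompose $f=B\cdot h$ where one controls things so that $h^{1/p}$-type pieces land in $\A^1$; however factorization in Bergman spaces is delicate, so I would not rely on this.

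The cleanest approach I would actually write up uses the closed-graph/duality-free estimate of Duren--Romberg--Shields--type: it is known (\cite{Du04}) that the antiderivative operator $f\mapsto F$ maps $\A^p$ boundedly into $\cH^p$ because both spaces are determined by equivalent integral means — precisely, $\|F\|_{\cH^p}^p \asymp |F(0)|^p + \int_0^1 (1-r)^{p-1}\,M_p(r,f)^p\,dr$ (a Hardy--Stein / Flett identity relating $\cH^p$ of $F$ to weighted means of $F'=f$), while $\|f\|_{\A^p}^p = 2\int_0^1 r\,M_p(r,f)^p\,dr$, and $(1-r)^{p-1}\le 1$ with the endpoint weight being harmless for $p\ge1$. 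So the main step is to invoke (or re-derive, via the subharmonicity of $|F|^p$ and integration in polar coordinates against the radial weight) the Flett-type formula for $\|F\|_{\cH^p}$, and then observe the integrals line up.

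The main obstacle is the case $1\le p<2$, where the naive Minkowski-inequality manipulation of the area integral runs the wrong way; the fix is either the Flett/Hardy--Stein identity reducing everything to one-dimensional weighted integrals of $M_p(r,f)$ (which handles all $p\ge1$ uniformly and is the route I would take), or a tent-space argument. One should also be slightly careful that "antiderivative" is only defined up to an additive constant, so the statement is really about membership, and $F(0)$ can be normalized to $0$ without loss of generality; and one needs $|F|^p$ subharmonic, which holds for $p\ge1$, to make the circular means $M_p(r,F)$ monotone in $r$ and hence to identify $\sup_r M_p(r,F)$ with the $\cH^p$ norm.
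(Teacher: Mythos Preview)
Your ``cleanest approach'' rests on the claimed equivalence
\[
\|F\|_{\cH^p}^p \;\asymp\; |F(0)|^p + \int_0^1 (1-r)^{p-1}\,M_p(r,F')^p\,dr,
\]
but this is \emph{not} a two-sided estimate for $p\neq 2$: the Littlewood--Paley inequalities give only one direction in each range, and for $p>2$ it is the wrong one. Concretely, take the lacunary series $F(z)=\sum_{k\ge 1}k^{-1/2}z^{2^k}$. By Paley's theorem $\|F\|_{\cH^p}\asymp\|F\|_{\cH^2}=(\sum k^{-1})^{1/2}=\infty$ for every $p$, yet a direct computation (using $M_4(r,F')\asymp M_2(r,F')$ for lacunary series) gives
\[
\int_0^1 (1-r)^{3}\,M_4(r,F')^4\,dr \;\asymp\; \int_0^{1/2}\frac{d\delta}{\delta\,\log^2(1/\delta)}<\infty.
\]
So finiteness of the weighted integral does not force $F\in\cH^4$. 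Note also that $(1-r)^{p-1}\le 1$ means your Flett-integral condition is \emph{weaker} than $F'\in\A^p$; you are therefore trying to prove something strictly stronger than the theorem, and for $p>2$ that stronger statement is simply false. Your area-function argument does cover $p\ge 2$, but then you are splicing two different machines across the ranges rather than giving one proof.

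The paper's argument is far more elementary and handles all $p\ge 1$ at once. Write $F(re^{i\theta})-F(0)=\int_0^r f(se^{i\theta})e^{i\theta}\,ds$; Minkowski's integral inequality gives $M_p(r,F)\le |F(0)|+\int_0^r M_p(s,f)\,ds$, and then a single application of H\"older (after splitting the $s$-integral at a fixed $\varepsilon\in(0,1)$ to absorb the Jacobian factor $s$) bounds $\int_0^1 M_p(s,f)\,ds$ by a constant times $\|f\|_{\A^p}$. No Littlewood--Paley theory, no tent spaces, no Flett identity---just the fundamental theorem of calculus and H\"older. Your Hardy--Stein/area-function machinery is genuine overkill here, and in the form you stated it, it does not close.
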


\begin{proof}
Let $\varepsilon\in(0,1)$. Then we have 
\begin{align*}
F(z)  &=  \int_{\varepsilon z}^{z}f(w)\d w-F(\varepsilon z)\overset{(w=tz)}{=}\int_{\varepsilon}^{1}f(tz)z\d t-F(\varepsilon z)
\end{align*}
To estimate $M_{p}(r,F)$, we examine the following two integrals
\begin{align*}
M_{p}(r,F) &=  \left(\dpi|F(re^{it})|^{p}\d t\right)^{\frac{1}{p}}\\
  &=  \left(\dpi\left|\int_{\varepsilon}^{1}f(sre^{it})re^{it}ds+F(\varepsilon re^{it})\right|^{p}\d t\right)^{\frac{1}{p}}\\
 &\le  \underset{=\colon I_{1}}{\underbrace{\left(\dpi\left|\int_{\varepsilon}^{1}f(sre^{it})re^{it}\d s\right|^{p}\d t\right)^{\frac{1}{p}}}}+\underset{=\colon I_{2}}{\underbrace{M_{p}(\varepsilon r,F)}}.
\end{align*}
 For the first term we have
\begin{align*}
I_{1}^{p}  &= \dpi\left|\int_{\varepsilon}^{1}f(sre^{it})re^{it}\d s\right|^{p}\d t\\
 &\leq r^{p}\dpi\left(\int_{\varepsilon}^{1}|f(sre^{it})|\d s\right)^{p}\d t\\
&\leq  r^{p}\dpi\left(\int_{\varepsilon}^{1}|f(sre^{it})|^{p}\d s\right)\d t\\
 &\leq r^{p+1}\dpi\left(\int_{r\varepsilon}^{r}|f(ue^{it})|^{p}\frac{u}{u}\d u\right)\d t\\
 &\leq  \frac{r^{p}}{\varepsilon}\dpi\left(\int_{0}^{1}|f(ue^{it})|^{p}u\d u\right)\d t\\
 &\leq  \frac{r^{p}}{\varepsilon}\|f\|_{\mathcal{A}^{p}}^{p}.
\end{align*}
Without loss of generality, we assume $f(0)=0$. Thus we obtain for the second integral 
\begin{align*}
I_{2}^{p}  &=  \dpi|F(\varepsilon re^{it})|^{p}\d t\\
  &=  \dpi\left|\int_{0}^{1}f(s\varepsilon re^{it})\varepsilon re^{it}\d s\right|^{p}\d t\\
 &\leq (\varepsilon r)^{p}\dpi\left(\int_{0}^{1}|f(s\varepsilon re^{it})|^{p}\d s\right)\d t\\
  &=  (\varepsilon r)^{p}\dpi\left(\int_{0}^{\varepsilon r}|f(ue^{it})|^{p}\d u\right)\d t\\
  &\le  (\varepsilon r)^{p}\|f\|_{\mathcal{A}^{p}}^{p}.
\end{align*}
 Combining these results, we have
\[
M_{p}(r,F)\leq\left(\frac{r^{p}}{\varepsilon}+(\varepsilon r)^{p}\right)\|f\|_{\mathcal{A}^{p}}.
\]
Letting $r\rightarrow1^{-}$, the right-hand side is still finite
since $\varepsilon$ can be chosen arbitrarily in $(0,1)$.
\end{proof}
This theorem remains true if we replace $\D$ by a Jordan domain $\Omega\subsetneq\C$.
\begin{corollary}
\label{cor:D-O}Theorem \ref{thm:anti} remains true if $\D$ is replaced
by a Jordan domain $\Omega\subsetneq\C$.
\end{corollary}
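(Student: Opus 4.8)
The plan is to transfer Theorem \ref{thm:anti} from $\D$ to a Jordan domain $\Omega$ by means of a conformal map, exactly in the spirit of the proofs of Propositions \ref{prop:SGHardy} and \ref{prop:SGBergman}. Fix a conformal map $k\colon\Omega\to\D$ with inverse $h:=k^{-1}\colon\D\to\Omega$. Let $f\in\A^p(\Omega)$ and let $F$ be an antiderivative of $f$ on $\Omega$. First I would record the two change-of-variables identities already used in the excerpt: since $h'$ is nonvanishing up to the boundary (\cite[Thm. 6.8]{Po92}) and bounded on $\bar\D$, we have
\[
\int_{\D}|f\circ h|^{p}|h'|^{2}\,\d A=\int_{\Omega}|f|^{p}\,\d A<\infty,
\]
so that $(f\circ h)\cdot (h')^{2/p}\in\A^p(\D)$; but more to the point, the boundedness of $h'$ and of $1/h'=k'\circ h$ on $\bar\D$ gives the simpler two-sided estimate showing $f\circ h\in\A^p(\D)$ directly, as in the proof of Proposition \ref{prop:SGBergman}.

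The key observation is that an antiderivative does not commute with composition, so one must supply the chain-rule factor. Set $g:=(f\circ h)\cdot h'$ on $\D$. Then $g$ is holomorphic, and since $h'\in\cH^\infty(\D)$ while $f\circ h\in\A^p(\D)$, we get $g\in\A^p(\D)$. Moreover $g$ is, up to the additive constant $F(h(0))$, exactly the derivative of $F\circ h$; indeed $(F\circ h)'=(F'\circ h)\cdot h'=(f\circ h)\cdot h'=g$. Hence $F\circ h$ is an antiderivative of $g\in\A^p(\D)$, so by Theorem \ref{thm:anti} we conclude $F\circ h\in\cH^p(\D)$. Finally, by \cite[Cor. to Thm. 10.1]{Dur70} (the same equivalence invoked in Proposition \ref{prop:SGHardy}), $F\circ h\in\cH^p(\D)$ if and only if $F\in\cH^p(\Omega)$, which is the assertion.

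The only genuine point requiring care — the main obstacle, such as it is — is verifying that $h'$ is bounded on $\D$ (so that multiplication by $h'$ maps $\A^p(\D)$ into itself) and that this boundedness is available under the bare hypothesis that $\Omega$ is a Jordan domain. This is where the Jordan (rather than merely rectifiable) hypothesis enters: by \cite[Thm. 6.8]{Po92} the conformal map $k\colon\Omega\to\D$ has a nonvanishing angular derivative, equivalently $h'$ extends and is bounded and bounded away from zero on $\bar\D$; the excerpt already relies on precisely this fact. Everything else is the routine composition bookkeeping above, so no further estimates beyond those in Theorem \ref{thm:anti} are needed.
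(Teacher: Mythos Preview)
Your proof is correct and follows essentially the same route as the paper: transfer to $\D$ via a conformal map $k$, observe that $(F\circ k^{-1})'=(f\circ k^{-1})\cdot (k^{-1})'$ lies in $\A^p(\D)$ because $f\circ k^{-1}\in\A^p(\D)$ and $(k^{-1})'$ is bounded (this is the paper's ``$f\circ k^{-1}\cdot\frac{1}{k'}\in\A^p(\D)$'' with the argument of $k'$ understood as $k^{-1}(z)$), and then invoke \cite[Cor.\ to Thm.\ 10.1]{Dur70}. The only cosmetic difference is that you apply Theorem~\ref{thm:anti} as a black box to the function $g=(f\circ h)\cdot h'$, whereas the paper phrases it as ``mostly copy the proof of Theorem~\ref{thm:anti}''; your formulation is in fact the cleaner one, since once $g\in\A^p(\D)$ and $F\circ h$ is an antiderivative of $g$, the theorem applies verbatim and no estimates need to be redone.
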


\begin{proof}
By \cite[Cor. to Thm. 10.1]{Dur70}, it is enough
to show that $F\circ k^{-1}\in\mathcal{H}^{p}(\D$) for some conformal
mapping $k\colon\Omega\rightarrow\D$. Therefore, one can mostly copy
the proof of Theorem \ref{thm:anti}, noting that for $f\in\mathcal{A}^{p}(\Omega)$
one has $f\circ k^{-1}\cdot\frac{1}{k'}\in\mathcal{A}^{p}(\D)$. 

The derivative of $k$ does not vanish in $\bar{\Omega}$,
and so we have
\[
\int_{\D}|f\circ k^{-1}|^{p}\left|\frac{1}{k'}\right|^{\frac{1}{p}}\d A\le C\|f\circ k^{-1}\|_{\A^{p}(\D)}^{p}.
\]
It remains to show that $f\circ k^{-1}\in\A^{p}(\D)$: 
\[
\int_{\D}|f\circ k^{-1}|^{p}\d A=\int_{\Omega}|f|^{p}|k'|^{2}\d A\le C\|f\|_{\A^{p}(\Omega)}^{p}<\infty.
\]

\end{proof}
Now we can define distributional boundary values for Bergman functions. 
\begin{theorem}
\label{thm:DstrBV}Let $p\in[1,\infty)$. Every function $f\in\mathcal{A}^{p}(\D)$
admits a distributional boundary value in $W^{-1,q}(\partial\D):=(W^{1,p}(\partial\D))'$,
the dual space of $W^{1,p}(\partial\D$), where $q$ is the usual
conjugate exponent of $p.$%

\end{theorem}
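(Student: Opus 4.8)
The plan is to reduce the statement to the corresponding fact about Hardy spaces, which we already control via Theorem~\ref{thm:anti}. Given $f\in\mathcal{A}^p(\D)$, fix an antiderivative $F$ of $f$; by Theorem~\ref{thm:anti} we have $F\in\mathcal{H}^p(\D)$, hence $F$ possesses a nontangential boundary function $F^*\in L^p(\partial\D)\subset L^1(\partial\D)$. The idea is then to \emph{define} the distributional boundary value of $f$ by formally differentiating $F^*$ along the circle: for $\phi\in W^{1,p}(\partial\D)$ set
\[
\langle f^*,\phi\rangle := -\frac{1}{i}\int_{\partial\D} F^*(e^{i\theta})\,\phi'(e^{i\theta})\,\d\theta ,
\]
interpreting $\phi'$ as the tangential (angular) derivative; this makes sense because $F^*\in L^q{}'$-paired trivially with $L^p$, and more to the point $F^*\in L^p$ pairs with $\phi'\in L^p{}$-- wait, the correct pairing is $F^*\in L^{p}\subset L^1$ against $\phi'\in W^{1,p}\hookrightarrow$ bounded when... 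Let me instead be careful: since $F^*\in L^p(\partial\D)$ and $\phi\in W^{1,p}(\partial\D)$ embeds into $C(\partial\D)$ only when $p>1$, but in all cases $\phi'\in L^p(\partial\D)$, so $F^*\phi'\in L^1$ by H\"older only if $F^*\in L^{p'}$. To avoid this annoyance one works with $\phi\in C^\infty(\partial\D)$ in the definition and checks boundedness for the $W^{1,p}$-norm afterwards using $\|F^*\|_{L^1}\le\|F^*\|_{L^p}=\|F\|_{\mathcal{H}^p}$ together with $\|\phi'\|_\infty$ controlled on the dense subspace, then extends by density; I will spell this out below.

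Concretely, the key steps are: (1) For $\phi\in C^\infty(\partial\D)$ and $0<r<1$, write $F_r(e^{i\theta})=F(re^{i\theta})$ and integrate by parts on the circle:
\[
\int_{\partial\D} f_r(e^{i\theta})\phi(e^{i\theta})\,\d\theta \;=\; \frac{1}{i r e^{i\theta}}\text{-factor bookkeeping}\;=\; -\int_{\partial\D} F_r(e^{i\theta})\,\partial_\theta\!\big(\phi(e^{i\theta})/(ire^{i\theta})\big)\,\d\theta,
\]
using $\partial_\theta F_r(e^{i\theta}) = ire^{i\theta}f(re^{i\theta}) = ire^{i\theta}f_r(e^{i\theta})$, so that $f_r$ is a constant-in-$\theta$-free multiple of $\partial_\theta F_r$ and no boundary terms appear since $\partial\D$ has no boundary. (2) Let $r\to1^-$: since $F\in\mathcal{H}^p$, $F_r\to F^*$ in $L^p(\partial\D)$ and hence in $L^1(\partial\D)$ (see \cite[Ch.~3]{Dur70}), while the test-function factor $\partial_\theta(\phi/(ire^{i\theta}))$ converges uniformly. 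Thus $\int f_r\phi\,\d\theta$ converges, and we set $\langle f^*,\phi\rangle$ equal to the limit; this identifies $f^*$ with (a multiple of) the distributional angular derivative of $F^*\in L^1(\partial\D)$. (3) Estimate: $|\langle f^*,\phi\rangle|\le C\|F^*\|_{L^1(\partial\D)}\|\phi\|_{W^{1,\infty}}$ in the first instance, but to get the $W^{-1,q}=(W^{1,p})'$ bound we instead keep $F^*\in L^p$ and pair it against $\phi'\in L^{p}$... the clean route is: $f^* = c\,\tfrac{\d}{\d\theta}(e^{-i\theta}F^*)$ in $\mathcal{D}'(\partial\D)$ with $e^{-i\theta}F^*\in L^p(\partial\D)$, and the distributional derivative map $\tfrac{\d}{\d\theta}\colon L^p(\partial\D)\to W^{-1,q}(\partial\D)$ is bounded because it is (minus) the adjoint of $\tfrac{\d}{\d\theta}\colon W^{1,p}(\partial\D)\to L^p(\partial\D)$. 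Hence $f^*\in W^{-1,q}(\partial\D)$ with $\|f^*\|_{W^{-1,q}}\le C\|F\|_{\mathcal{H}^p}$, and by Theorem~\ref{thm:anti} and the closed graph / open mapping considerations $\|F\|_{\mathcal{H}^p}\le C\|f\|_{\mathcal{A}^p}$.

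Finally, (4) uniqueness and well-definedness: the antiderivative $F$ is unique up to an additive constant, which contributes only a multiple of the constant function to $F^*$ and hence nothing to its angular derivative, so $f^*$ does not depend on the choice of $F$; and the limit defining $f^*$ against $C^\infty(\partial\D)$ test functions is exactly the notion in Definition~\ref{def:BS} (with $k=\mathrm{id}$), with $f^*$ determined by its action on a dense subspace of $W^{1,p}(\partial\D)$. The main obstacle I anticipate is purely bookkeeping: correctly tracking the factor $e^{i\theta}$ (equivalently, that $f_r$ is $\partial_\theta F_r$ divided by $ire^{i\theta}$, not simply $\partial_\theta F_r$) through the integration by parts, and making sure the resulting object is genuinely an element of $(W^{1,p}(\partial\D))'$ rather than just of $C^\infty(\partial\D)'$ — this is where one must resist estimating in $L^1$ and instead exploit $F^*\in L^p$ paired with the derivative landing in $L^p$ via duality of $W^{1,p}$ and $W^{-1,q}$.
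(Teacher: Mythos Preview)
Your approach is essentially identical to the paper's: take an antiderivative $F\in\mathcal{H}^p$ via Theorem~\ref{thm:anti}, integrate by parts on circles of radius $r<1$, and let $r\to1^-$ using $F_r\to F^*$ in $L^p(\partial\D)$. You track the $ire^{i\theta}$ factor and worry about the $L^p$--$L^p$ pairing more explicitly than the paper (which writes the integration by parts as though $\partial_t F(re^{it})=f(re^{it})$ and then simply invokes H\"older and dominated convergence), but the method is the same.
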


\begin{proof}
Let $\varphi\in W^{1,p}(\partial\D)$. We denote by $F$ the antiderivative
of $f$, so we obtain 
\begin{align*}
\dpi f(re^{it})\varphi(e^{it})\d t  &=  \underset{=0}{\underbrace{\frac{1}{2\pi}F(re^{it})\varphi(e^{it})|_{0}^{2\pi}}}-\dpi F(re^{it})\varphi'(e^{it})\d t\\
  &\overset{r\rightarrow1^{-}}{\rightarrow}  -\left\langle T_{f},\varphi\right\rangle .
\end{align*}
This limit exists by using Theorem \ref{thm:anti}, H\"older's inequality,
and the dominated convergence theorem.
\end{proof}
\begin{corollary}
\label{thm:dbvp}Let $\Omega\subset\C$ be Jordan domain. Then every
function $f\in\mathcal{A}^{p}(\Omega)\,(p\in[1,\infty))$ admits a
distributional boundary value in $W^{-1,q}(\partial\Omega)$.
\end{corollary}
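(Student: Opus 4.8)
The plan is to reduce the statement on a Jordan domain $\Omega$ to the already-established disk case, Theorem \ref{thm:DstrBV}, by transporting everything through a fixed conformal map $k\colon\Omega\to\D$. First I would pick $k$ and recall the two facts about it that are used repeatedly in the preceding results: $k$ extends to a homeomorphism of $\bar\Omega$ onto $\bar\D$ (\cite[Thm.~2.6]{Po92}), and $k'$ extends continuously to $\bar\Omega$ with $k'$ nowhere vanishing on $\bar\Omega$ (\cite[Thm.~6.8]{Po92}), so both $|k'|$ and $1/|k'|$ are bounded on $\bar\Omega$. These bounds are exactly what made Corollary \ref{cor:D-O} work, and the same estimates show that $f\mapsto (f\circ k^{-1})\cdot (k^{-1})'$ maps $\A^p(\Omega)$ boundedly into $\A^p(\D)$.

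Next I would set up the change of variables. Given $f\in\A^p(\Omega)$, let $F$ be an antiderivative of $f$ on $\Omega$; by Corollary \ref{cor:D-O}, $F\in\cH^p(\Omega)$, hence $F\circ k^{-1}\in\cH^p(\D)$ by \cite[Cor.~to Thm.~10.1]{Dur70}. Now $(F\circ k^{-1})' = (f\circ k^{-1})\cdot (k^{-1})'$, and the right-hand side lies in $\A^p(\D)$ by the boundedness observation above; so $F\circ k^{-1}$ is an $\cH^p(\D)$ function whose derivative is an $\A^p(\D)$ function. Applying the disk result, Theorem \ref{thm:DstrBV}, to $g:=(f\circ k^{-1})\cdot(k^{-1})'\in\A^p(\D)$ produces a distributional boundary value $T_g\in W^{-1,q}(\partial\D)$ realized as
\[
\langle T_g,\varphi\rangle = -\lim_{r\to1^-}\dpi (F\circ k^{-1})(re^{it})\,\varphi'(e^{it})\,\d t \qquad(\varphi\in W^{1,p}(\partial\D)).
\]
To obtain the boundary value of $f$ itself on $\partial\Omega$, I would transport test functions: for $\phi\in W^{1,p}(\partial\Omega)$ the composition $\phi\circ k^{-1}$ is again $W^{1,p}$ on $\partial\D$ because $k$ is a bi-Lipschitz boundary homeomorphism (its boundary derivative and the derivative of $k^{-1}$ are bounded and bounded away from zero), and conversely. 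Pairing $f$ against $\phi$ on $\partial\Omega$ becomes, after the substitution $x=k^{-1}(e^{it})$ with Jacobian $|(k^{-1})'(e^{it})|$, a pairing on $\partial\D$ of the form already controlled by $T_g$; hence the limit defining $f^*$ in the sense of Definition \ref{def:BS} exists, and the resulting functional is bounded on $W^{1,p}(\partial\Omega)$, i.e.\ lies in $W^{-1,q}(\partial\Omega)$.

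The main obstacle is the bookkeeping in the change of variables on the boundary: one has to check carefully that the factor $(k^{-1})'$ coming from the Jacobian of the substitution combines correctly with the $k'$-type factor relating $\A^p(\Omega)$ to $\A^p(\D)$, so that the distribution $T_g$ on $\partial\D$ genuinely represents $f^*$ on $\partial\Omega$ (and not $f^*$ times some extraneous nonvanishing weight) — though even a nonvanishing bounded weight would still land in $W^{-1,q}$, so the statement is robust. A secondary, purely technical point is confirming that $\varphi\mapsto\varphi\circ k$ is an isomorphism of $W^{1,p}(\partial\D)$ onto $W^{1,p}(\partial\Omega)$; this again follows from the Dini-free facts that $k$ and $k^{-1}$ have bounded, non-vanishing boundary derivatives, which are available here precisely because $\Omega$ is a Jordan domain, as noted in the discussion preceding Definition \ref{def:BS}. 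Once these identifications are in place, the existence of the limit and the $W^{-1,q}(\partial\Omega)$ membership are immediate from Theorem \ref{thm:DstrBV}.
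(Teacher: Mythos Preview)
Your proposal is correct and follows essentially the same route as the paper: fix a conformal map $k\colon\Omega\to\D$, use the boundedness and non-vanishing of $k'$ on $\bar\Omega$ (via \cite[Thm.~2.6, Thm.~6.8]{Po92}) to transport $f$ into $\A^p(\D)$ and test functions $\phi\in W^{1,p}(\partial\Omega)$ into $W^{1,p}(\partial\D)$, then invoke Theorem~\ref{thm:DstrBV}. The paper's version is terser---it simply changes variables in the boundary integral and cites the disk result---whereas you additionally route through Corollary~\ref{cor:D-O} and the explicit antiderivative, but this is an expository rather than a substantive difference.
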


\begin{proof}
Let $\varphi\in W^{1,p}(\partial\Omega)$, and let
$k:\Omega\to\D$ be conformal. For $r\in(0,1)$ we define as usual
$f_{r}:\bar{\Omega}\to\C,x\mapsto f(k^{-1}(rk(x))).$ Thus $f_{r}\to f$
as $r\to1^{-}$. Then 
\begin{align}
\int_{\partial\Omega}f_{r}(x)\varphi(x)\d x &= \int_{\partial\D}f_{r}(k^{-1}(x))\varphi(k^{-1}(x))\left|\frac{1}{k'(x)}\right|^{2}\d x\nonumber \\
 &= \int_{0}^{2\pi}f_{r}(k^{-1}(e^{it}))\varphi(k^{-1}(e^{it}))\left|\frac{1}{k'(x)}\right|^{2}ie^{it}\d t.\label{eq:dbvo}
\end{align}
It is easy to show that $f_{r}\circ k^{-1}\in\A^{p}(\D)$ and $\varphi\circ k^{-1}\in L^{p}(\partial\D)$.
Since $k$ is a conformal, we also have $\varphi\circ k^{-1}\in W^{1,p}(\partial\D)$.
So, by \cite[Thm 6.8]{Po92}, we obtain convergence of the integral
(\ref{eq:dbvo}) as $r\to1^{-}$.
\end{proof}
Distributional boundary values of harmonic and holomorphic functions
defined on a simply connected domain with smooth boundary have been
studied in \cite{St84}. There it has been shown that a holomorphic
function admits a distributional boundary value if and only if it
lies in the Sobolev space $H^{-k}(\Omega):=W^{-k,2}(\Omega)$ for
some $k\in\mathbb{N}$, see \cite[Thm. 1.3]{St84}. Moreover, by \cite[Cor. 1.7]{St84},
for all $k\in\mathbb{N}$ the map $P$ defined by 
\begin{align*}
P:W^{-k-\frac{1}{2},2}(\partial\Omega)  &\to  H^{-k}(\Omega)\cap\cH(\Omega,\C)\\
T_{f}  &\mapsto  \langle P_{z},T_{f}\rangle=f(z),
\end{align*}
where $P_{z}$ is the Poisson kernel for $\Omega$, is an isomorphism.
The inverse is given by assigning the distributional boundary value
to a given function. Thus, functions in $H^{-k}(\Omega)\cap\cH(\Omega,\C)$
are uniquely determined by their boundary distributions. Therefore,
restricting the map $P$ to the boundary space $\partial\mathcal{A}^{p}(\Omega)$
for some $p\in[1,\infty)$, we can recover each function in $\mathcal{A}^{p}(\Omega)$
using the Poisson operator. 

\section{Dirichlet-to-Robin via composition semigroups}

In this section we work out our main result, the connection between
partial differential equations on the boundary associated with Poincar\'{e}-Steklov
operators and semigroups of composition operators on Banach spaces
of holomorphic functions.

\subsection*{The Lax semigroup}

Let $h\colon\partial\D\rightarrow\C$ be a 'nice' function and consider
the following elliptic equation

\begin{equation}
\begin{cases}
-\Delta u=0 & \textrm{in }\D,\\
u=h & \textrm{on }\partial\D.
\end{cases}\label{eq:DP.D}
\end{equation}
The Dirichlet-to-Neumann operator $\mathfrak{D}_{\mathcal{N}}$ maps
the function $h$ to the Neumann derivative of the solution of (\ref{eq:DP.D})
provided that a solution exists and is sufficiently regular. As it
is shown by Lax \cite{Lax02}, if $g\in C(\partial\D)$ or in $L^{2}(\partial\D)$,
the Dirichlet-to-Neumann operator generates the following semigroup
\begin{equation}
T_{t}h(z)\colon=u(ze^{-t})\quad(z\in\partial\D).\label{eq:lax}
\end{equation}
This semigroup solves the first order evolution equation associated
with the Dirichlet-to-Neumann operator
\begin{equation}
\begin{cases}
\partial_{t}u+\partial_{\nu}u=\ 0 & \textrm{on }(0,\infty)\times\partial\D,\\
-\Delta u=0 & \textrm{on }(0,\infty)\times\D\\
u(0,\cdot)=h& \textrm{on }\partial\D.
\end{cases}\label{eq:DtN}
\end{equation}

In fact, the semigroup (\ref{eq:lax}) is an unweighted semigroup
of composition operators on $h^{p}(\D)$ if $h\in\partial h^{p}(\D)\subseteq L^{p}(\partial\D)$
($p\in(1,\infty)$) with associated semiflow $(\varphi_{t})_t$ given by $\varphi_t(z)=ze^{-t}\,(z\in\D)$.
The generator  is given by $\ G(z)=-z=-\nu(z)\,\,(z\in\D)$,
and therefore the generator of the semigroup (\ref{eq:lax}) is $\Gamma u=-\nu \cdot \nabla u\,\,(u\in\text{dom}(\Gamma)\subset h^{p}(\D))$.
So, for $h\in\text{dom}(\mathfrak{D}_{\mathcal{N}})\subset L^{p}(\partial\D)$
and $u\in h^{p}(\Omega)$ the solution to (\ref{eq:DP.D}), 
\begin{align*}
-\mathfrak{D}_{\mathcal{N}}h &= -\partial_{\nu}u\\
 &= -\nu \cdot \nabla u\\
 &=\operatorname{Tr}( \Gamma u).
\end{align*}

\subsection*{Dirichlet-to-Neumann on $\Omega$}

Replacing $\D$ by a simply connected domain $\Omega$ with Dini-smooth
boundary in (\ref{eq:DP.D}) and (\ref{eq:DtN}), we obtain a similar
correspondence. Let $k:\Omega\to\D$ be conformal, then $\nu(z)=\frac{k(z)}{k'(z)}|k'(z)|$
is the unit normal vector at $z\in\partial\Omega$. Since $\partial\Omega$
is Dini-smooth, $k\in C^{1}(\bar{\Omega})$ by \cite[Thm. 3.5]{Po92}.
Thus $G(z)=-\frac{k(z)}{k(z)'}\,\,(z\in\Omega)$ is holomorphic in
$\Omega$ and uniformly continuous on $\bar{\Omega}$, and moreover,
$\Re(-G\bar{\nu})\le0$ on $\partial\Omega$. So, by Proposition \ref{prop:CharGen}
(I), $G$ generates a semiflow in $\cH(\Omega)$. Therefore, we obtain
the following relation between the Dirichlet-to-Neumann operator on
$\partial h^{p}(\Omega)\subset L^{p}(\partial\Omega)$ and the unweighted
semigroup of composition operators on $h^{p}(\Omega)$. Let $u\in h^{p}(\Omega)$
be the solution to
\[
\begin{cases}
-\Delta u=0 & \textrm{in }\Omega,\\
u=h & \textrm{on }\partial\Omega,
\end{cases}
\]
where $h\in L^{p}(\partial\Omega)$. Then, for $h\in\text{dom}(\mathfrak{D}_{\mathcal{N}})$,
\begin{align*}
-\mathfrak{D}_{\mathcal{N}}h &= -\partial_{\nu}u\\
 &= \operatorname{Tr}((G\cdot \nabla u)|k'|),
\end{align*}
and $\Gamma u:=G\cdot \nabla u$ is the generator of an unweighted semigroup
of composition operators on $h^{p}(\Omega)$ with semiflow generated
by $G$. So the Dirichlet-to-Neumann operator is a multiplicative perturbation of 
the generator of  the semigroup of composition operators. Indeed, in \cite[Thm. 2.2]{EmSh13},
it has been shown that the Dirichlet-to-Neumann semigroup is the trace of a semigroup of composition operators only if $\Omega$ is a disk. This result relies
on the the fact that the normal unit vector (viewed as a complex valued
map on $\partial\Omega$) can only extended analytically to $\Omega$
if $\partial\Omega$ is a circle \cite[Thm 3.1]{EmSh13}.

\subsection*{Dirichlet-to-Robin semigroup}

From our previous investigations, it is now clear how to state well-posedness of the evolution
problem (\ref{eq:DTR}) associated with the Dirichlet-to-Robin operator. This is the main theorem of this article.
\begin{theorem}[Main Theorem]
Let $\Omega\subsetneq\C$ be a Jordan domain, and let $G:\Omega\to\C$
be the generator of a semiflow of holomorphic functions in $\cH(\Omega)$
and $g:\Omega\to\C$ holomorphic such that $X\subset\mathbb{H}(\Omega,\C)$
is $(g,G)$-admissible space. Then the evolution problem associated
with the Dirichlet-to-Robin operator
\begin{equation}
\begin{cases}
\partial_{t}u-g\cdot u-G\cdot\partial_{z}u=0 &\textrm{on } (0,\infty)\times\partial\Omega,\\
-\Delta u=0 & \textrm{on } (0,\infty)\times\partial\Omega,\\
u(0,\cdot)=u_{0} & \textrm{on }\partial\Omega,
\end{cases}
\end{equation}
is well-posed in $\partial X$, and the solution is given by the trace
of a weighted semigroup of composition operators.
\end{theorem}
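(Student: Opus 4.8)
The plan is to recast the boundary problem in (\ref{eq:DTR}) as the abstract Cauchy problem for the generator of a strongly continuous semigroup on $\partial X$, the semigroup being obtained by transporting the weighted composition semigroup on $X$ through the isomorphism $\mathrm{Tr}$. First I would build the semigroup on $X$: since $G$ generates the semiflow $(\varphi_t)_t$ and $g$ is holomorphic, formula (\ref{eq:cocy}) yields a cocycle $(m_t)_t$, and (\ref{eq:WSG}) defines a weighted semigroup of composition operators $(S_t)_{t\ge0}$ on $\mathbb{H}(\Omega,\C)$. By the standing hypothesis that $X$ is $(g,G)$-admissible, $(S_t)_{t\ge0}$ restricts to a strongly continuous semigroup on $X$ whose generator $\Gamma$ acts by $\Gamma f=g\cdot f+G\cdot f'$ on $\mathrm{dom}\,\Gamma$, as recorded just after Definition \ref{def:admsibble}. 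Because $\mathrm{Tr}\colon X\to\partial X$ is an isometric isomorphism by the very definition of the norm on $\partial X$ (Definition \ref{def:BS}), the operators $S_t^{\partial}:=\mathrm{Tr}\circ S_t\circ\mathrm{Tr}^{-1}$ form a strongly continuous semigroup on $\partial X$ with generator $\Gamma^{\partial}=\mathrm{Tr}\circ\Gamma\circ\mathrm{Tr}^{-1}$ and domain $\mathrm{Tr}(\mathrm{dom}\,\Gamma)$; this is the candidate solution operator, and $u(t,\cdot):=S_t^{\partial}u_0$ the candidate solution.

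Next I would check that $u$ solves the system. For the Laplace equation, for each fixed $t$ the element $u(t,\cdot)$ is the distributional boundary value of $f_t:=S_t(\mathrm{Tr}^{-1}u_0)\in X\subset\mathbb{H}(\Omega,\C)$; since functions in $X$ are uniquely recovered from their boundary distributions via the Poisson operator $P$ of \cite{St84}, the harmonic extension of $u(t,\cdot)$ is exactly $f_t$, which is harmonic in $\Omega$. For the first-order equation, if $u_0\in\mathrm{dom}\,\Gamma^{\partial}$ then $t\mapsto u(t,\cdot)$ is continuously differentiable in $\partial X$ and
\[
\partial_t u(t,\cdot)=\Gamma^{\partial}u(t,\cdot)=\mathrm{Tr}\bigl(g\cdot f_t+G\cdot f_t'\bigr)=g\cdot u(t,\cdot)+G\cdot\partial_z u(t,\cdot),
\]
where the last identity is read in $\partial X$, with $\partial_z u(t,\cdot)$ the boundary value of $f_t'$ (the Wirtinger derivative of the harmonic extension). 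The initial condition $u(0,\cdot)=u_0$ is the strong continuity of $(S_t^{\partial})$ at $t=0$. Finally, well-posedness — for every $u_0\in\partial X$ the existence of a unique mild solution, classical for $u_0\in\mathrm{dom}\,\Gamma^{\partial}$, depending continuously on the datum with an a priori estimate $\|u(t,\cdot)\|_{\partial X}\le Me^{\omega t}\|u_0\|_{\partial X}$ — is the standard equivalence between well-posedness of the abstract Cauchy problem $u'=\Gamma^{\partial}u$, $u(0)=u_0$, and $\Gamma^{\partial}$ generating a $C_0$-semigroup, which has just been established.

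The step I expect to be the main obstacle is the justification of the displayed identity at the level of boundary distributions: one must show that $\mathrm{Tr}$ intertwines pointwise multiplication by the holomorphic functions $g,G$ on $\Omega$ with multiplication by their boundary traces on $\partial X$, and that these boundary traces are admissible multipliers on the relevant distribution space (for instance $W^{-1,q}(\partial\Omega)$ in the Bergman case), so that the abstract equation $\partial_t u=\Gamma^{\partial}u$ genuinely becomes the stated pointwise boundary PDE rather than merely an abstract operator identity. This should be handled by approximating with the dilates $f_r$ from Definition \ref{def:BS} and passing to the limit, using the continuous extension of $g,G$ to $\bar\Omega$ (or their membership in a suitable multiplier class). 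Once this is in place, the remaining claims are routine transfers of $C_0$-semigroup theory from $X$ to $\partial X$.
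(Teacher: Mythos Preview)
Your proposal is correct and follows the same route as the paper: build the weighted composition semigroup $(S_t)$ on $X$ from the semiflow and cocycle, identify its generator as $\Gamma f=gf+Gf'$, and transport everything to $\partial X$ via the isometric isomorphism $\mathrm{Tr}$ to obtain the Dirichlet-to-Robin semigroup $e^{-t\mathfrak{D}_{\mathcal R}}u_0=\mathrm{Tr}(m_t\cdot u\circ\varphi_t)$. The paper's proof is much terser and does not engage with your ``main obstacle'' at all --- it simply takes the first equation in the system to \emph{mean} the abstract identity $\partial_t u=\mathrm{Tr}(\Gamma\,\mathrm{Tr}^{-1}u)$ on $\partial X$, so no separate multiplier or intertwining argument is invoked.
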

\begin{proof}
Let $(S_{t})_{t}$ be the semigroup of weighted composition operators
with semiflow $(\varphi_{t})_{t}$ in $\cH(\Omega)$ generated by
$G$ and weight 
\[
m_{t}(z)=\exp\left(\int_{0}^{t}g(\varphi_{s}(z))ds\right)\quad(z\in\Omega).
\]
 We denote by $\Gamma$ the generator of $(S_{t})_{t}.$ Then the
Dirichlet-to-Robin operator $\dtr:\text{dom}(\dtr)\subset\partial X\to\partial X,u_{0}\mapsto(g\cdot u+G\cdot u')|_{\partial\Omega}$
is given by 
\begin{align*}
\dtr u_{0} &= \text{Tr}(g\cdot u+G\cdot u')\\
 &= \text{Tr}(\Gamma u).
\end{align*}
So we obtain the Dirichlet-to-Robin semigroup as 
\[
e^{-t\dtr}u_{0}=\text{Tr}(m_{t}\cdot u\circ\varphi_{t})\quad(u_{0}\in\partial X).
\]
\end{proof}
\begin{remark}
We would like to emphasize that a boundary space in the sense of distributions
is not necessary since we can always define boundary values using
hyperfunctions. In this case our initial value would be very general.
On the other hand, if $\varphi=\varphi_{1}$ has an interior Denjoy-Wolff
point and is not an inner function, then for $z\in\partial\Omega$
and $t$ sufficiently large, $\varphi_{t}(z)$ lies strictly inside
$\Omega$, see \cite[Thm. 1.2]{PC10}. Thus there is actually no
need to restrict to distributions in problem (\ref{eq:DTR}).

It is worth noting that the function $G$ may degenerate at some point $a\in\partial\Omega$.
This is even possible if $a$ is not a fixed point of the generated
semiflow $(\varphi_{t})_{t}$; on the other hand, if $a$ is a non-superrepulsive
fixed point of $\varphi$ (i.e., $\varphi'(a)\neq\infty$), then the
angular limit $\lim_{z\to a}G(z)=0$, see \cite[Thm. 1]{CMP06}. We repeat from the Introduction that we do not see how to include such a $G$ in the variational approach.
\end{remark}

\subsection*{Acknowledgment}
I am grateful to my supervisor Ralph Chill, who brought this topic to my attention, for support and valuable suggestions which improved the presentation of the paper.

\end{document}